\journal{Elsevier}
\newcommand{\domD}{\mathscr{D}}
\renewcommand{\pi}{\piup}
\renewcommand{\Re}{\operatorname{Re}}
\renewcommand{\Im}{\operatorname{Im}}
\DeclareMathOperator{\arsinh}{arsinh}
\DeclareMathOperator{\OO}{O}
\newif\ifkp@upRm
\DeclareSymbolFont{Letters}{OML}{jkp}{m}{n}
\DeclareMathSymbol{\uppartial}{\mathord}{Letters}{128}
\DeclareMathOperator{\E}{e}
\DeclareMathOperator{\I}{i}
\newdefinition{definition}{Definition}[section]
\newtheorem{theorem}{Theorem}[section]
\newtheorem{lemma}[theorem]{Lemma}
\newtheorem{proposition}[theorem]{Proposition}
\newtheorem{remark}{Remark}[section]
\newproof{proof}{Proof}
\numberwithin{equation}{section}
\begin{document}

\begin{frontmatter}

\title{Improvement of conformal maps combined with the Sinc approximation for derivatives over infinite intervals~\tnoteref{mytitlenote}}
\tnotetext[mytitlenote]{This work was partially supported by JSPS
Grant-in-Aid for Scientific Research (C) JP23K03218.}

\author[HCU]{Tomoaki Okayama\corref{cor1}}
\cortext[cor1]{Corresponding author}
\affiliation[HCU]{organization={Hiroshima City University},
addressline={3-4-1, Ozuka-higashi, Asaminami-ku},
city={Hiroshima},
postcode={731-3194},
country={Japan}}
\ead{okayama@hiroshima-cu.ac.jp}


\author[secondauthor]{Yuito Kuwashita}
\affiliation[secondauthor]{organization={Arcadia Plus Inc.},
addressline={Sakura Dori Otsu KT Building 8F, 3-20-22, Marunouchi, Naka-ku},
city={Nagoya},
postcode={460-0002},
country={Japan}}

\author[HCU]{Ao Kondo}




\begin{abstract}
F. Stenger proposed
efficient approximation formulas for derivatives over infinite intervals.
These formulas were derived by combining
the Sinc approximation with appropriate conformal maps.
It has been demonstrated that these formulas can attain root-exponential convergence.
In this study, we enhance the convergence rate by improving
the conformal maps employed in those formulas.
We provide a theoretical error analysis and numerical experiments
that confirm the effectiveness of our new formulas.
\end{abstract}

\begin{keyword}
Sinc approximation\sep single-exponential transformation
\sep numerical differentiation
\MSC[2010] 65D25
\end{keyword}

\end{frontmatter}

\section{Introduction}
\label{sec:intro}

This study focuses on the approximation formulas
for derivatives based on the Sinc approximation
\begin{equation}
\label{eq:Sinc-approximation}
 F(x) \approx \sum_{k=-M}^N F(kh)S(k,h)(x),\quad x\in\mathbb{R},
\end{equation}
where $h$ is the mesh size,
$M$ and $N$ are truncation numbers,
and $S(k, h)$ is the so-called Sinc function defined by
\[
 S(k,h)(x)=
\begin{cases}
\dfrac{\sin[\pi(x - kh)/h]}{\pi(x - kh)/h} & (x\neq kh), \\
1 & (x = kh).
\end{cases}
\]
The Sinc approximation~\eqref{eq:Sinc-approximation}
is known to be efficient
(nearly optimal~\cite{stenger78:_optim_h,sugihara03:_near})
for analytic functions $F$
that satisfy the following two conditions:
(i) $F(x)$ is defined on the entire real axis $\mathbb{R}$, and
(ii) $|F(x)|$ decays exponentially as $x\to\pm\infty$.
When those conditions are not satisfied,
Stenger~\cite{stenger93:_numer,Stenger} proposed
to employ an appropriate conformal map
depending on the target interval $(a,b)$ and
the decay rate of the given function $f$.
He considered the following five typical cases:
\begin{enumerate}
 \item $(a, b)=(0,\infty)$ and $|f(t)|$ decays algebraically as $t\to\infty$,
 \item $(a, b)=(0,\infty)$ and $|f(t)|$ decays exponentially as $t\to\infty$,
 \item $(a, b)=(-\infty,\infty)$ and $|f(t)|$ decays algebraically as $t\to\pm\infty$,
 \item $(a, b)=(-\infty,\infty)$ and $|f(t)|$ decays algebraically as $t\to-\infty$ and exponentially as $t\to\infty$,
 \item the interval $(a,b)$ is finite.
\end{enumerate}
For each case, he presented a recommended conformal map
$\psi_i$ $(i=1,\,\ldots,\,5)$ as
\begin{align*}
 t &= \psi_1(x) = \E^x,\\
 t &= \psi_2(x) = \arsinh(\E^x),\\
 t &= \psi_3(x) = \sinh x,\\
 t &= \psi_4(x) = \sinh(\log(\arsinh(\E^x))),\\
 t &= \psi_5(x) = \frac{b - a}{2}\tanh\left(\frac{x}{2}\right)+\frac{b + a}{2}.
\end{align*}
Combination of the conformal map $\psi_i$ with
the Sinc approximation~\eqref{eq:Sinc-approximation} gives
an approximation for a function $f(t)$ as
\begin{equation}
\label{eq:psi-Sinc-approximation}
 f(t)\approx \sum_{k=-M}^N f(\psi_i(kh))S(k,h)(\psi_i^{-1}(t)),
\quad t\in (a,b).
\end{equation}
For a derivative of $f$, one may naturally consider
differentiating both sides of~\eqref{eq:psi-Sinc-approximation} as
\begin{equation}
 f'(t)\approx \sum_{k=-M}^N f(\psi_i(kh))S'(k,h)(\psi_i^{-1}(t))
\left\{\psi_i^{-1}(t)\right\}',
\quad t\in (a,b).
\end{equation}
However, $\{\psi_i^{-1}(t)\}'$ diverges at endpoints in some cases
(more precisely, in the cases $i=1,\, 2,\, 5$).
In such cases,
we cannot expect uniform approximation over the target interval $(a, b)$.
To address this issue,
in addition to $\psi_i$,
Stenger~\cite{stenger93:_numer} proposed to use
an appropriate function $g_i$ as
\begin{align}
 \frac{f(t)}{g_i(t)}
&\approx \sum_{k=-M}^N \frac{f(\psi_i(kh))}{g_i(\psi_i(kh))}
S(k,h)(\psi_i^{-1}(t)),\quad t\in(a,b),\nonumber
\intertext{which is equivalent to}
 f(t) &\approx \sum_{k=-M}^N \frac{f(\psi_i(kh))}{g_i(\psi_i(kh))}
g_i(t)S(k,h)(\psi_i^{-1}(t)),\quad t\in(a,b).
\label{eq:psi-g-Sinc-approximation}
\end{align}
The functions $g_i$ ($i=1,\,\ldots,\,5$) are given as
\begin{align*}
 g_1(t) &= \left(\frac{t}{1+t}\right)^m,\\
 g_2(t) &= \left(1 - \E^{-t}\right)^m,\\
 g_3(t) &= 1,\\
 g_4(t) &= 1,\\
 g_5(t) &= (t - a)^m(b - t)^m,
\end{align*}
which are chosen so that
$g_i(t)$ suppresses the divergence of $\{\psi_i^{-1}(t)\}^{(m)}$.
The concrete form of the approximation formula
for $m$-th derivative is derived by differentiating both sides
of~\eqref{eq:psi-g-Sinc-approximation} as
\begin{equation}
 f^{(m)}(t)
\approx \sum_{k=-M}^N \frac{f(\psi_i(kh))}{g_i(\psi_i(kh))}
\left(\frac{\mathrm{d}}{\mathrm{d}t}\right)^m
\left\{
g_i(t)S(k,h)(\psi_i^{-1}(t))
\right\},\quad t\in(a, b).
\label{eq:Stenger-formula}
\end{equation}
He also conducted a theoretical error analysis and
showed that this approximation formula yields a uniform
approximation over the target interval, and can attain
root-exponential convergence: $\OO(\exp(-c\sqrt{n}))$,
where $c$ is a positive constant and $n=\max\{M,N\}$.
Owing to such high efficiency, several authors
have utilized the formula to solve
differential equations~\cite{bialecki91:_sinc,lund92:_sinc_method_quadr_differ_equat,morlet95:_conver,saadatmandi07}.

This study aims to improve the convergence
rate of the formula in the cases $i=2$ and $i=4$.
The concept for the improvement is replacing the conformal maps;
for $i=2$, we replace $\psi_2(x)$ with
\begin{align*}
 \phi_2(x) &= \log(1 + \E^{x}),
\intertext{and for $i=4$, we replace $\psi_4(x)$ with}
 \phi_4(x) &= 2\sinh(\log(\log(1 + \E^x))).
\end{align*}
Such replacement of the conformal maps
has already been conducted
for function approximation~\cite{OkaShinKatsu,okayama21:_improv_sinc}
and integral approximation~\cite{OkayamaMachida,tomoaki21:_new},
and improvement of the convergence rate has been reported.
Furthermore, in the case of integral approximation,
it has been theoretically revealed that
$\phi_2$ is \emph{always} superior to $\psi_2$~\cite{okayama22:_theor}.
Considering these studies as motivation,
we propose a new approximation formula for $m$-th derivative as
\begin{equation}
  f^{(m)}(t)
\approx \sum_{k=-M}^N \frac{f(\phi_i(kh))}{g_i(\phi_i(kh))}
\left(\frac{\mathrm{d}}{\mathrm{d}t}\right)^m
\left\{
g_i(t)S(k,h)(\phi_i^{-1}(t))
\right\},\quad t\in(a, b),
\label{eq:new-formula}
\end{equation}
for $i=2$ and $i=4$.
We also conduct a theoretical error analysis claiming that
the improved formulas can attain $\OO(\exp(-c'\sqrt{n}))$,
where $c'$ is a constant that may be greater than $c$.

The remainder of this paper is organized as follows.
Convergence theorems for Stenger's formula
are summarized in Section~\ref{sec:Stengers-formula}.
Convergence theorems for the new approximation formula
are described in Section~\ref{sec:improve-converge}
along with their proofs.
Numerical examples are presented in Section~\ref{sec:numer}.
Finally,
the conclusions of this study are presented in Section~\ref{sec:conclusion}.

\section{Convergence theorems for Stenger's formula}
\label{sec:Stengers-formula}


In this section, we describe the convergence theorems of
Stenger's formula~\eqref{eq:Stenger-formula}
for $i=2$ and $i=4$.
In the original theorems,
the summation is not $\sum_{k=-M}^N$ but $\sum_{k=-N}^N$.
However, extension to $\sum_{k=-M}^N$ is relatively straightforward
as follows.
Here, let $\domD_d$ be a strip domain
defined by $\domD_d=\{\zeta\in\mathbb{C}:|\Im \zeta|< d\}$
for $d>0$.
Furthermore, let
$\domD_d^{-}=\{\zeta\in\domD_d :\Re\zeta< 0\}$
and
$\domD_d^{+}=\{\zeta\in\domD_d :\Re\zeta\geq 0\}$.

\begin{theorem}[Stenger~{\cite[Theorem~4.4.2 and Example~4.4.6]{stenger93:_numer}}]
\label{thm:Stenger-2}
Assume that $f$ is analytic in $\psi_2(\domD_d)$ with $0<d<\pi/2$,
and that there exist positive constants $K$, $\alpha$ and $\beta$
such that
\begin{equation}
 \left|\frac{f(z)}{g_2(z)}\right|
\leq K \left|\frac{z}{1+z}\right|^{\alpha}|\E^{-z}|^{\beta}
\label{eq:f-g_2-bound}
\end{equation}
holds for all $z\in\psi_2(\domD_d)$.
Let $\mu = \min\{\alpha,\beta\}$,
let $M$ and $N$ be defined as
\begin{equation}
M = \left\lceil\frac{\mu}{\alpha}n\right\rceil,
\quad
N = \left\lceil\frac{\mu}{\beta }n\right\rceil,
\label{eq:Def-MN}
\end{equation}
and let $h$ be defined as
\begin{equation}
h = \sqrt{\frac{\pi d}{\mu n}}.
\label{eq:Def-h}
\end{equation}
Then, there exists a constant $C$ independent of $n$ such that
\[
 \sup_{t\in (0,\infty)}\left|
 f^{(m)}(t)
- \sum_{k=-M}^N \frac{f(\psi_2(kh))}{g_2(\psi_2(kh))}
\left(\frac{\mathrm{d}}{\mathrm{d}t}\right)^m
\left\{
g_2(t)S(k,h)(\psi_2^{-1}(t))
\right\}
\right|
\leq C n^{(m+1)/2}\exp\left(-\sqrt{\pi d \mu n}\right).
\]
\end{theorem}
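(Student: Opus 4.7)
The plan is to reduce the claim to the standard Sinc-approximation error analysis on the real line by introducing the transformed function $F(\zeta)=(f/g_2)(\psi_2(\zeta))$. Since $f$ is analytic in $\psi_2(\domD_d)$ and $(1-\E^{-\psi_2(\zeta)})^{-m}$ is analytic in $\domD_d$, the function $F$ itself is analytic in $\domD_d$. The hypothesis~\eqref{eq:f-g_2-bound}, combined with the real-axis asymptotics $\psi_2(x)\sim \E^{x}$ as $x\to-\infty$ and $\psi_2(x)\sim x+\log 2$ as $x\to +\infty$, then yields a constant $K'$ such that $|F(x)|\leq K'\E^{\alpha x}$ for $x\leq 0$ and $|F(x)|\leq K'\E^{-\beta x}$ for $x\geq 0$, which is precisely the two-sided exponential decay required by Sinc theory.

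The next step is to rewrite the pointwise error as
\[
 f^{(m)}(t)-\sum_{k=-M}^{N}F(kh)\left(\frac{\D}{\D t}\right)^{\!m}\bigl\{g_2(t)S(k,h)(\psi_2^{-1}(t))\bigr\}=\left(\frac{\D}{\D t}\right)^{\!m}\bigl\{g_2(t)E_N(t)\bigr\},
\]
where $E_N(t)=F(\psi_2^{-1}(t))-\sum_{k=-M}^{N}F(kh)S(k,h)(\psi_2^{-1}(t))$ is the Sinc interpolation remainder written in the variable $t$. Expanding by the Leibniz rule reduces the task to uniformly bounding $|g_2^{(m-j)}(t)(\D/\D t)^{j}E_N(t)|$ on $(0,\infty)$ for $j=0,\ldots,m$. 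After transferring the $j$ derivatives from $t$ to $x=\psi_2^{-1}(t)$ via the chain rule, each term becomes a finite sum of products of a $t$-dependent coefficient and a derivative $\tilde{E}_N^{(\ell)}(x)$ of the remainder in the transformed variable $x$, with $\ell\leq j\leq m$.

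The uniform bound on $\tilde{E}_N^{(\ell)}$ follows the classical three-term Sinc decomposition: a discretization part, controlled via the residue-theorem contour integral along $\partial\domD_d$ and yielding a factor of order $\E^{-\pi d/h}/h^{\ell+1}$ once the decay of $F$ in the strip is invoked, together with two truncation pieces for $k<-M$ and $k>N$ estimated by geometric tails, which contribute factors of order $\E^{-\alpha Mh}/h^{\ell}$ and $\E^{-\beta Nh}/h^{\ell}$ respectively. With the choices~\eqref{eq:Def-MN} and~\eqref{eq:Def-h}, all three exponents coincide at $-\sqrt{\pi d\mu n}$, and the factor $1/h^{\ell+1}$ contributes $\OO(n^{(\ell+1)/2})$; the worst case $\ell=m$ produces the prefactor $n^{(m+1)/2}$ stated in the theorem.

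The hardest part will be to verify that the $t$-dependent coefficients arising from the chain rule, which contain derivatives of $\psi_2^{-1}$ that blow up as $t\to 0$, are absorbed by the corresponding derivatives of $g_2$. The function $g_2(t)=(1-\E^{-t})^m$ is engineered so that $g_2^{(m-j)}(t)$ vanishes to sufficient order at $t=0$ to cancel these singularities; establishing that the resulting coefficients are bounded on the entire half-line $(0,\infty)$, and not only in a neighbourhood of $0$, is the most delicate combinatorial step of the argument.
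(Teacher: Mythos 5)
Your outline reconstructs exactly the strategy this paper (following Stenger) uses: pass to $F=(f/g_2)\circ\psi_2$, split the error into a discretization part bounded by a contour integral over $\partial\domD_d$ and two truncation tails, balance the exponents $\pi d/h$, $\alpha Mh$, $\beta Nh$ via~\eqref{eq:Def-MN} and~\eqref{eq:Def-h}, and absorb the singular derivatives of $\psi_2^{-1}$ near $t=0$ into the vanishing of $g_2^{(m-j)}$ --- the last point being precisely the condition~\eqref{eq:modified-TK-bound-in-disc} that the paper isolates as the crux. The paper itself does not reprove this theorem (it cites Stenger's Theorem~4.4.2), but the machinery it assembles in Section~\ref{sec:proofs} for the analogous Theorems~\ref{thm:our-2} and~\ref{thm:our-4} is the same decomposition you describe, so your approach is essentially identical.
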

\begin{theorem}[Stenger~{\cite[Theorem~1.5.4]{Stenger}}]
\label{thm:Stenger-4}
Assume that $f$ is analytic in $\psi_4(\domD_d)$ with $0<d<\pi/2$,
and that there exist positive constants $K$, $\alpha$ and $\beta$
such that
\begin{align}
 |f(z)|&\leq K |\E^{-z}|^{2\beta}\label{leq:f-Dd-plus-original}
\intertext{holds for all $z\in\psi_4(\domD_d^{+})$, and}
 |f(z)|&\leq K \frac{1}{|z|^{\alpha}}\label{leq:f-Dd-minus-original}
\end{align}
holds for all $z\in\psi_4(\domD_d^{-})$.
Let $\mu = \min\{\alpha,\beta\}$,
let $M$ and $N$ be defined as~\eqref{eq:Def-MN},
and let $h$ be defined as~\eqref{eq:Def-h}.
Then, there exists a constant $C$ independent of $n$ such that
\[
 \sup_{t\in (-\infty,\infty)}\left|
 f^{(m)}(t)
- \sum_{k=-M}^N f(\psi_4(kh))
\left(\frac{\mathrm{d}}{\mathrm{d}t}\right)^m
\left\{
S(k,h)(\psi_4^{-1}(t))
\right\}
\right|
\leq C n^{(m+1)/2}\exp\left(-\sqrt{\pi d \mu n}\right).
\]
\end{theorem}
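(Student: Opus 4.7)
The plan is to adapt Stenger's proof of the symmetric-truncation version (Theorem~1.5.4 of~\cite{Stenger}) to the asymmetric truncation $\sum_{k=-M}^{N}$ used here. First I would introduce the transplanted function $F(\zeta)=f(\psi_4(\zeta))$ on the strip $\domD_d$. Using the asymptotic behavior $\psi_4(\zeta)\sim \zeta/2$ as $\Re\zeta\to+\infty$ and $\psi_4(\zeta)\sim -\tfrac12\E^{-\zeta}$ as $\Re\zeta\to-\infty$ (both uniform for $|\Im\zeta|<d$), the hypotheses~\eqref{leq:f-Dd-plus-original} and~\eqref{leq:f-Dd-minus-original} translate into the exponential-decay bounds $|F(\zeta)|\leq C\E^{-\beta\Re\zeta}$ on $\domD_d^{+}$ and $|F(\zeta)|\leq C\E^{\alpha\Re\zeta}$ on $\domD_d^{-}$, placing $F$ in the standard Sinc class.

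Next, the error at fixed $t\in\mathbb{R}$ would be split into a discretization error (corresponding to the full infinite-sum approximation) plus two truncation errors over $k<-M$ and $k>N$. Stenger's contour-integral representation on $\partial\domD_d$, combined with the decay bounds on $F$, controls the discretization error; differentiating $m$ times with respect to $t$ and invoking the boundedness of $(\psi_4^{-1})'(t)$ and its higher derivatives on $\mathbb{R}$ (which is precisely the reason $g_4=1$ is admissible in this case) introduces a factor of $(1/h)^{m+1}$ via the standard estimates on derivatives of the Sinc basis $S(k,h)$, giving the discretization contribution $\OO((1/h)^{m+1}\exp(-\pi d/h))$.

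For the two truncation errors, the bounds $|F(kh)|\leq C\E^{-\beta kh}$ for $k>N$ and $|F(kh)|\leq C\E^{\alpha kh}$ for $k<-M$, together with uniform boundedness in $t\in\mathbb{R}$ of $(\D/\D t)^{m}S(k,h)(\psi_4^{-1}(t))$, yield $\OO((1/h)^{m}\E^{-\beta Nh})$ on the right and $\OO((1/h)^{m}\E^{-\alpha Mh})$ on the left via geometric-sum estimates. The choices~\eqref{eq:Def-MN} make $\alpha Mh\geq\sqrt{\pi d\mu n}$ and $\beta Nh\geq\sqrt{\pi d\mu n}$ with $h$ given by~\eqref{eq:Def-h}, so all three exponents coincide at $\sqrt{\pi d\mu n}$ and summing the three contributions produces the claimed bound.

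The main obstacle is the rigorous \emph{uniform} verification of the two decay bounds on $F$ throughout the entire strip $\domD_d$: the bound on $\domD_d^{+}$ requires a uniform version of $\psi_4(\zeta)\sim\zeta/2$ that controls the error term over the full imaginary range $|\Im\zeta|<d$, while the bound on $\domD_d^{-}$ requires ensuring that $\arsinh(\E^\zeta)$ avoids the branch cut of $\log$ for every $\zeta\in\domD_d^{-}$, which is precisely the constraint responsible for the restriction $d<\pi/2$. Once this conformal-geometric verification is in hand (as carried out in Stenger~\cite{Stenger}), the passage from the symmetric to the asymmetric truncation is a routine bookkeeping exercise on the two tail sums.
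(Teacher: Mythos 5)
Your proposal is correct and follows essentially the same route as the paper: the paper states this theorem as a citation to Stenger (remarking only that passing from the symmetric truncation $\sum_{k=-N}^{N}$ to $\sum_{k=-M}^{N}$ is straightforward), and the decomposition you describe --- transplanting $f$ to $F=f\circ\psi_4$ on the strip, using the asymptotics $\psi_4(\zeta)\sim\zeta/2$ and $\psi_4(\zeta)\sim-\tfrac12\E^{-\zeta}$ to get one-sided exponential decay of $F$, and then splitting into a contour-integral discretization error plus two geometric-sum truncation errors balanced by the choices of $h$, $M$, $N$ --- is exactly the machinery the paper sets up (Theorem~\ref{thm:discretization-error} and Lemma~\ref{lem:truncation-error}) for the analogous Theorems~\ref{thm:our-2} and~\ref{thm:our-4}. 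The only minor imprecision is bookkeeping: the geometric tail sums carry an extra factor $1/(1-\E^{-\mu h})=\OO(1/h)$, so the truncation term is $\OO(h^{-m-1}\E^{-\mu nh})$ rather than $\OO(h^{-m}\E^{-\mu nh})$, but this does not affect the final bound $\OO(n^{(m+1)/2}\E^{-\sqrt{\pi d\mu n}})$.
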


\section{Improvement of the convergence rate}
\label{sec:improve-converge}

In this section, we present the new results provided in this study.
We describe the convergence theorems of
our formula~\eqref{eq:new-formula}
for $i=2$ and $i=4$ in Section~\ref{subsec:our-result}.
The proof is provided in Section~\ref{sec:proofs}.

\subsection{Convergence theorems for our formula}
\label{subsec:our-result}

Convergence theorems for the new formula are described as follows.

\begin{theorem}
\label{thm:our-2}
Assume that $f$ is analytic in $\phi_2(\domD_d)$ with $0<d<\pi$,
and that there exist positive constants $K$, $\alpha$ and $\beta$
such that~\eqref{eq:f-g_2-bound}
holds for all $z\in\phi_2(\domD_d)$.
Let $\mu = \min\{\alpha,\beta\}$,
let $M$ and $N$ be defined as~\eqref{eq:Def-MN},
and let $h$ be defined as~\eqref{eq:Def-h}.
Then, there exists a constant $C$ independent of $n$ such that
\[
 \sup_{t\in (0,\infty)}\left|
 f^{(m)}(t)
- \sum_{k=-M}^N \frac{f(\phi_2(kh))}{g_2(\phi_2(kh))}
\left(\frac{\mathrm{d}}{\mathrm{d}t}\right)^m
\left\{
g_2(t)S(k,h)(\phi_2^{-1}(t))
\right\}
\right|
\leq C n^{(m+1)/2}\exp\left(-\sqrt{\pi d \mu n}\right).
\]
\end{theorem}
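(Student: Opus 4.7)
The strategy is to mirror Stenger's proof of Theorem~\ref{thm:Stenger-2}, replacing $\psi_2$ by $\phi_2$ throughout. The key geometric advantage is that $\phi_2(\zeta)=\log(1+\E^\zeta)$ has singularities only at $\zeta=\I\pi+2\I k\pi$, so it is holomorphic on $\domD_d$ for every $0<d<\pi$; in contrast, $\psi_2(\zeta)=\arsinh(\E^\zeta)$ already has branch points at $|\Im\zeta|=\pi/2$. This explains why the admissible strip width doubles while the overall shape of the bound is preserved.

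First I would rewrite the error as
\[
\left(\frac{\D}{\D t}\right)^{\!m}\bigl[g_2(t)\,E(t)\bigr],\quad E(t)=\frac{f(t)}{g_2(t)}-\sum_{k=-M}^N\frac{f(\phi_2(kh))}{g_2(\phi_2(kh))}S(k,h)(\phi_2^{-1}(t)).
\]
Setting $u=\phi_2^{-1}(t)$ and $\tilde F=(f/g_2)\circ\phi_2$, $E(t)$ is the ordinary Sinc interpolation error of $\tilde F$ on $\mathbb{R}$. The hypothesis that $f$ is analytic on $\phi_2(\domD_d)$ gives analyticity of $\tilde F$ on $\domD_d$, while the bound~\eqref{eq:f-g_2-bound}, combined with the asymptotics $\phi_2(\zeta)\sim\E^\zeta$ as $\Re\zeta\to-\infty$ and $\phi_2(\zeta)\sim\zeta$ as $\Re\zeta\to+\infty$, furnishes single-exponential decay of $\tilde F$ at both ends of $\domD_d$ with rates $\alpha$ and $\beta$ respectively. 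Standard Sinc approximation error estimates in the Hardy space $\mathbf{H}^1(\domD_d)$, together with the balanced choices~\eqref{eq:Def-MN} and~\eqref{eq:Def-h}, then yield $\sup_t|E(t)|=\OO(\E^{-\sqrt{\pi d\mu n}})$, and Cauchy's derivative estimate inside $\domD_d$ upgrades this to similar bounds on each $E^{(k)}$, at the cost of only polynomial-in-$n$ prefactors.

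Finally I would expand the $m$-th derivative via Leibniz,
\[
\left(\frac{\D}{\D t}\right)^{\!m}\bigl[g_2(t)E(t)\bigr]=\sum_{j=0}^{m}\binom{m}{j}g_2^{(j)}(t)E^{(m-j)}(t),
\]
and bound each summand uniformly for $t\in(0,\infty)$ to obtain the final factor $n^{(m+1)/2}$. The main obstacle is precisely this uniform control: as $t\downarrow 0$ or $t\to\infty$, the derivatives of the chain $u=\phi_2^{-1}(t)$ diverge, so $E^{(k)}(t)$ can blow up near the endpoints. The multiplicative factor $g_2(t)=(1-\E^{-t})^m$ is tailored to cancel this divergence—its zero of order $m$ at $t=0$ and its saturation at infinity compensate exactly for the worst endpoint derivatives—and verifying that this cancellation remains effective with the new map $\phi_2$, via a Fa\`a di Bruno expansion of the composed derivatives together with careful tracking of the asymptotic orders of $g_2^{(j)}$ and of $(\phi_2^{-1})^{(k)}$ near $0$ and $\infty$, is the technical heart of the proof that distinguishes it from Stenger's original argument.
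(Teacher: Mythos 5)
Your overall architecture is the right one and matches the paper's: rewrite the error as $(\mathrm{d}/\mathrm{d}t)^m[g_2(t)E(t)]$ with $E$ the Sinc interpolation error of $F=(f/g_2)\circ\phi_2$, observe that $\phi_2$ is holomorphic on $\domD_d$ for every $d<\pi$ (the source of the improvement over $\psi_2$), and resolve the endpoint behaviour by playing the zero of $g_2$ at $t=0$ against the blow-up of the derivatives of $\phi_2^{-1}$, via Leibniz and Fa\`{a} di Bruno. The decay of $F$ on $\mathbb{R}$ and the balanced choices \eqref{eq:Def-MN} and \eqref{eq:Def-h} are also correctly identified.

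The gap is in the step that carries the sup-norm bound on $E$ to the derivatives in your Leibniz expansion. You assert that Cauchy's estimate inside $\domD_d$ upgrades $\sup_t|E(t)|=\OO(\E^{-\sqrt{\pi d\mu n}})$ to similar bounds on each $E^{(k)}$ ``at the cost of only polynomial-in-$n$ prefactors,'' and two sentences later you correctly note that $E^{(k)}(t)$ blows up as $t\downarrow 0$. These two statements are incompatible: the Cauchy estimate for the $k$-th $t$-derivative at $t$ requires a disc of radius $r(t)=\operatorname{dist}(t,\partial\phi_2(\domD_d))$, and $r(t)\to 0$ as $t\downarrow 0$, so it yields only $\OO(r(t)^{-k})$, which is not uniform; moreover the truncated-tail part of $E$ is a finite sum of Sinc terms that grow exponentially off the real axis, so a sup bound on $E$ over a complex neighbourhood of $(0,\infty)$ is not available in the first place. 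The paper never bounds $E^{(k)}$ in isolation: it splits the error into discretization and truncation parts and invokes Theorem~\ref{thm:discretization-error} and Lemma~\ref{lem:truncation-error}, in both of which the $m$ derivatives fall on kernels of the form $g(t)\E^{\I s\phi^{-1}(t)}$ multiplied by products of $(\phi^{-1})^{(l)}$, so that the required cancellation is exactly condition \eqref{eq:modified-TK-bound-in-disc}. Verifying that condition is then elementary but must actually be done: one needs the explicit orders $|g_2^{(j)}(t)|\leq C_j(1-\E^{-t})^{m-j}$ and $|\{\phi_2^{-1}(t)\}^{(j)}|\leq C_j(1-\E^{-t})^{-j}$ (Propositions~\ref{prop:bound-g2-deriv} and~\ref{prop:bound-phi-inv-deriv}), which you gesture at but do not supply, together with the inequality of Lemma~\ref{lem:bound-OkaShinKatsu} to obtain the real-line decay \eqref{leq:F-exponential-decay-real} needed for the truncation error. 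Without either a corrected route to uniform bounds on the Leibniz summands or these concrete estimates, the argument is not complete.
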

\begin{theorem}
\label{thm:our-4}
Assume that $f$ is analytic in $\phi_4(\domD_d)$ with $0<d<\pi$,
and that there exist positive constants $K$, $\alpha$ and $\beta$
such that
\begin{align}
 |f(z)|&\leq K |\E^{-z}|^{\beta}\label{leq:f-Dd-plus}
\end{align}
holds for all $z\in\phi_4(\domD_d^{+})$,
and~\eqref{leq:f-Dd-minus-original}
holds for all $z\in\phi_4(\domD_d^{-})$.
Let $\mu = \min\{\alpha,\beta\}$,
let $M$ and $N$ be defined as~\eqref{eq:Def-MN},
and let $h$ be defined as~\eqref{eq:Def-h}.
Then, there exists a constant $C$ independent of $n$ such that
\[
 \sup_{t\in (-\infty,\infty)}\left|
 f^{(m)}(t)
- \sum_{k=-M}^N f(\phi_4(kh))
\left(\frac{\mathrm{d}}{\mathrm{d}t}\right)^m
\left\{
S(k,h)(\phi_4^{-1}(t))
\right\}
\right|
\leq C n^{(m+1)/2}\exp\left(-\sqrt{\pi d \mu n}\right).
\]
\end{theorem}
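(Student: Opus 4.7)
The plan is to adapt Stenger's proof of Theorem~\ref{thm:Stenger-4} by substituting $\phi_4$ for $\psi_4$ throughout and exploiting two structural advantages of $\phi_4$: it is analytic on the wider strip $\domD_d$ for any $d<\pi$ (versus $d<\pi/2$ for $\psi_4$), and $\phi_4(x)\sim x$ as $x\to+\infty$ (versus $\psi_4(x)\sim x/2$), which accounts for the weaker decay assumption $|\E^{-z}|^{\beta}$ in~\eqref{leq:f-Dd-plus} compared with $|\E^{-z}|^{2\beta}$ in~\eqref{leq:f-Dd-plus-original}.

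First I would set $F(x)=f(\phi_4(x))$ and verify, via the chain rule, that $F$ is analytic on $\domD_d$. Combining~\eqref{leq:f-Dd-plus} and~\eqref{leq:f-Dd-minus-original} with the asymptotic behaviour $\phi_4(\zeta)\sim\zeta$ as $\Re\zeta\to+\infty$ and $\phi_4(\zeta)\sim -\E^{-\zeta}$ as $\Re\zeta\to-\infty$, uniformly in $|\Im\zeta|<d$ for each fixed $d<\pi$, I would derive single-exponential bounds $|F(\zeta)|\leq C\E^{-\beta\Re\zeta}$ on $\domD_d^{+}$ and $|F(\zeta)|\leq C\E^{\alpha\Re\zeta}$ on $\domD_d^{-}$.

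Next I would write the error as $E_m(t)=D_m(t)+T_m(t)$, where $D_m$ is the $m$-th derivative of the discretization error of the full infinite Sinc series and $T_m$ is the sum of its truncated tails for $k<-M$ and $k>N$. For $D_m$ I would first bound the scalar Sinc discretization error for $F$ on $\domD_d$, obtaining $\OO(\sqrt{n}\,\exp(-\sqrt{\pi d\mu n}))$ by a standard Paley--Wiener-type argument using the bounds of Step~1 together with the choices~\eqref{eq:Def-MN} and~\eqref{eq:Def-h}; I would then transport this estimate to the $m$-th derivative either by applying Cauchy's integral formula on a circle of radius $r\asymp h\asymp 1/\sqrt{n}$ in the $t$-plane about each $t\in\mathbb{R}$, or equivalently by bounding $(\D/\D t)^{m}S(k,h)(\phi_4^{-1}(t))$ directly, which introduces a factor $h^{-m}\asymp n^{m/2}$. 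For $T_m$ I would use the single-exponential decay of $f\circ\phi_4$ established in Step~1 together with a uniform bound on $(\D/\D t)^{m}S(k,h)(\phi_4^{-1}(t))$ in $t$ (which holds because $\phi_4^{-1}$ and all of its derivatives stay bounded on $\mathbb{R}$, the very reason $g_4\equiv1$ is admissible) and sum two geometric series to obtain $\OO(\exp(-\alpha Mh))+\OO(\exp(-\beta Nh))$. With~\eqref{eq:Def-MN} and~\eqref{eq:Def-h} all three exponential contributions collapse to $\exp(-\sqrt{\pi d\mu n})$ and the polynomial prefactors combine to $n^{(m+1)/2}$.

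The hardest step will be the derivative transport: showing that the scalar discretization error admits an analytic extension, with a uniform bound, into a tubular neighbourhood of $\mathbb{R}$ in the $t$-plane of width comparable to $1/\sqrt{n}$. This requires a precise description of the image $\phi_4(\domD_d)$ near its boundary and a verification that $\phi_4^{-1}$ continues analytically with controlled derivatives into that neighbourhood. The fact that this extension is available for every $d<\pi$, rather than only for $d<\pi/2$ as in the $\psi_4$ case, is precisely the geometric property of $\phi_4$ that drives the improvement over Theorem~\ref{thm:Stenger-4}.
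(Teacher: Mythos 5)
Your outline follows the same skeleton as the paper's proof: split the error into a discretization part and a truncation part, work with $F=f\circ\phi_4$, establish single-exponential bounds on $F$, and exploit the boundedness of $\phi_4^{-1}$ and all its derivatives on $\mathbb{R}$ (your observation that this is ``the very reason $g_4\equiv1$ is admissible'' is exactly the point). Where you diverge is the ``derivative transport.'' The paper does not push a scalar discretization estimate into a complex tube of width $\asymp h$ via Cauchy's formula; it invokes a ready-made discretization theorem for $m$-th derivatives (Theorem~\ref{thm:discretization-error}, due to Okayama and Tanaka), whose only nontrivial hypothesis~\eqref{eq:modified-TK-bound-in-disc} is a purely real-variable bound of the form $\tilde C_1h^{-m}$ on products of $(\uppartial/\uppartial t)^{m-j}\{\E^{\I s\phi_4^{-1}(t)}\}$ with powers of $(\phi_4^{-1})^{(l)}(t)$. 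That is precisely your second alternative (``bounding $(\mathrm{d}/\mathrm{d}t)^m S(k,h)(\phi_4^{-1}(t))$ directly''), and the paper carries it out with Fa\`{a} di Bruno's formula applied to $\phi_4^{-1}=\phi_2^{-1}\circ p$, $p(t)=(t+\sqrt{4+t^2})/2$, using the elementary bounds $|p^{(l)}(t)|\le C_l(4+t^2)^{-(l+1)/2}$ and a small monotonicity lemma (Proposition~\ref{prop:sub-inequality}). So the step you single out as hardest --- analytic continuation of the error, with controlled derivatives of $\phi_4^{-1}$, into a tubular neighbourhood of $\mathbb{R}$ of width $\asymp1/\sqrt{n}$ --- is not needed at all, and pursuing it would be considerably more delicate than the real-variable route. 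Two further cautions: the strip bounds on $F$ cannot be read off from the asymptotics of $\phi_4$ alone, since one needs uniform inequalities valid on all of $\overline{\domD_{\pi}^{\pm}}$ such as $|\E^{1/\log(1+\E^{\zeta})}|\le\E^{1/\log2}$ and $|(-1+\log(1+\E^{\zeta}))^{-1}|\le(1-\log2)^{-1}$ (Lemmas~\ref{lem:bound-Ddplus} and~\ref{lem:bound-Ddminus}); and the membership $F\in\mathbf{H}^1(\domD_d)$, which is what actually feeds the discretization theorem, is established by a cited lemma rather than by a Paley--Wiener argument. With those substitutions your plan closes and reproduces the paper's bound.
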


The large difference between Stenger's and
our theorems is the upper bound of $d$;
$d<\pi/2$ in Theorems~\ref{thm:Stenger-2} and~\ref{thm:Stenger-4},
whereas $d<\pi$ in Theorems~\ref{thm:our-2} and~\ref{thm:our-4}.
This implies that the value of $d$ in our theorems
may be larger than that in Stenger's theorems.
Especially in Theorem~\ref{thm:our-4},
the value of $\mu$ may also be larger than that in Theorem~\ref{thm:Stenger-4},
because $\beta$ should be twice as large
in view of~\eqref{leq:f-Dd-plus-original} and~\eqref{leq:f-Dd-plus}.
The values of $d$ and $\mu$ cause a difference in the convergence rate,
which is estimated in common as
$\OO(n^{(m+1)/2}\exp(-\sqrt{\pi d \mu n}))$.

\subsection{Proofs}
\label{sec:proofs}

In this subsection, proofs of the new theorems stated in Section~\ref{subsec:our-result}
are provided.
First, we describe the organization of the proof
in Section~\ref{sec:sketch-proof}.
Then, we prove Theorem~\ref{thm:our-2} in Section~\ref{sec:our-2},
and Theorem~\ref{thm:our-4} in Section~\ref{sec:our-4}.

\subsubsection{Sketch of the proof}
\label{sec:sketch-proof}

To estimate the error of~\eqref{eq:new-formula},
we divide it into two terms as
\begin{align*}
&\left|
f^{(m)}(t) -
\sum_{k=-M}^N \frac{f(\phi_i(kh))}{g_i(\phi_i(kh))}
\left(\frac{\mathrm{d}}{\mathrm{d}t}\right)^m
\left\{
g_i(t)S(k,h)(\phi_i^{-1}(t))
\right\}
\right|\\
&\leq \left|
f^{(m)}(t) -
\sum_{k=-\infty}^{\infty} \frac{f(\phi_i(kh))}{g_i(\phi_i(kh))}
\left(\frac{\mathrm{d}}{\mathrm{d}t}\right)^m
\left\{
g_i(t)S(k,h)(\phi_i^{-1}(t))
\right\}
\right|\\
&\quad +
\left|
\sum_{k=-\infty}^{-M-1}
\frac{f(\phi_i(kh))}{g_i(\phi_i(kh))}
\left(\frac{\mathrm{d}}{\mathrm{d}t}\right)^m
\left\{
g_i(t)S(k,h)(\phi_i^{-1}(t))
\right\}
+\sum_{k=N+1}^{\infty}
\frac{f(\phi_i(kh))}{g_i(\phi_i(kh))}
\left(\frac{\mathrm{d}}{\mathrm{d}t}\right)^m
\left\{
g_i(t)S(k,h)(\phi_i^{-1}(t))
\right\}
\right|,
\end{align*}
which are referred to as the discretization error and truncation error,
respectively.
To analyze the discretization error,
the following function space is important.

\begin{definition}
Let $d$ be a positive constant, and
let $\domD_d(\epsilon)$ be a rectangular domain defined
for $0<\epsilon<1$ by
\[
\domD_d(\epsilon)
= \{\zeta\in\mathbb{C}:|\Re\zeta|<1/\epsilon,\, |\Im\zeta|<d(1-\epsilon)\}.
\]
Then, $\mathbf{H}^1(\domD_d)$ denotes the family of all analytic functions $F$
on $\domD_d$ such that the norm $\mathcal{N}_1(F,d)$ is finite, where
\[
\mathcal{N}_1(F,d)
=\lim_{\epsilon\to 0}\oint_{\partial \domD_d(\epsilon)} |F(\zeta)||\mathrm{d}\zeta|.
\]
\end{definition}

Under the definition,
the discretization error has been estimated as follows.

\begin{theorem}[Okayama and Tanaka~{\cite[Theorem~3]{okayama23:_error_sinc}}]
Let $d>0$, let $\phi$ be a conformal map that maps $\domD_d$ onto
a complex domain $\domD$ containing $(a,b)$,
and let $F(\zeta)=f(\phi(\zeta))/g(\phi(\zeta))$.
Assume that $F\in\mathbf{H}^1(\domD_d)$.
Moreover,
assume that there exists a positive constant $\tilde{C}_1$,
for all nonnegative integers $\{\lambda_l\}_{l=0}^{j}$ satisfying
$\lambda_0=0$ and $\sum_{l=1}^{j} l \lambda_l = j$, it holds that
\begin{equation}
\sup_{t\in(a, b)}
\left|
\left\{g(t)\sin\left(\frac{\pi\phi^{-1}(t)}{h}\right)\right\}^{(m-j)}
\prod_{l=0}^{j}\left[\left(\phi^{-1}(t)\right)^{(l)}\right]^{\lambda_l}
\right|
\leq \tilde{C}_1 h^{-m}
\quad (j=0,\,1,\,2,\,\ldots,\,m).
\label{eq:modified-Stenger-bound-in-disc}
\end{equation}
Then, there exists a constant $\tilde{C}_2$ independent of $h$ such that
\begin{align}
\sup_{t\in(a,b)}\left|
f^{(m)}(t)
- \left(\frac{\mathrm{d}}{\mathrm{d} t}\right)^m
\sum_{k=-\infty}^{\infty} \frac{f(\phi(kh))}{g(\phi(kh))}
\left(\frac{\mathrm{d}}{\mathrm{d}t}\right)^m
\left\{
g(t)S(k,h)(\phi^{-1}(t))
\right\}
\right|
&\leq \frac{\tilde{C}_2 \mathcal{N}_1(F,d)}{h^m \sinh(\pi d/h)}.
\label{eq:discretization-error}
\end{align}
\end{theorem}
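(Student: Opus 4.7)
The plan is to represent the infinite-sum Sinc approximation error of $F(\zeta)=f(\phi(\zeta))/g(\phi(\zeta))$ as a contour integral over $\partial\domD_d$ (the standard $\mathbf{H}^1$-space formula), to compute the $m$-th $t$-derivative of the resulting expression via Leibniz and Fa\`a di Bruno, and finally to bound each resulting term by combining hypothesis~\eqref{eq:modified-Stenger-bound-in-disc} with a direct contour estimate.

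First I would write $f(t)=g(t)F(\phi^{-1}(t))$ so that the discretization error reduces to
\[
E(t):=g(t)\Bigl[F(\phi^{-1}(t))-\sum_{k=-\infty}^{\infty}F(kh)S(k,h)(\phi^{-1}(t))\Bigr].
\]
For $F\in\mathbf{H}^1(\domD_d)$ and $\zeta\in\mathbb{R}$, the classical Sinc error representation gives
\[
F(\zeta)-\sum_{k=-\infty}^{\infty}F(kh)S(k,h)(\zeta)
=\frac{\sin(\pi\zeta/h)}{2\pi\I}\lim_{\epsilon\to0}\oint_{\partial\domD_d(\epsilon)}\frac{F(w)}{(w-\zeta)\sin(\pi w/h)}\,\D w
=:\sin(\pi\zeta/h)\,J(\zeta),
\]
where the vertical segments of $\partial\domD_d(\epsilon)$ vanish in the limit thanks to $\mathcal{N}_1(F,d)<\infty$. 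Substituting $\zeta=\phi^{-1}(t)$ yields $E(t)=g(t)\sin(\pi\phi^{-1}(t)/h)\,J(\phi^{-1}(t))$.

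Second, I would apply the Leibniz rule and then Fa\`a di Bruno on the composition $J(\phi^{-1}(t))$, which rewrites $E^{(m)}(t)$ as a finite linear combination of terms of the shape
\[
\bigl\{g(t)\sin(\pi\phi^{-1}(t)/h)\bigr\}^{(m-j)}\cdot J^{(|\lambda|)}(\phi^{-1}(t))\cdot\prod_{l=1}^{j}\bigl[(\phi^{-1})^{(l)}(t)\bigr]^{\lambda_l},
\]
indexed by $j\in\{0,\ldots,m\}$ and nonnegative sequences $\lambda$ with $\lambda_0=0$ and $\sum_{l=1}^{j}l\lambda_l=j$. The first factor together with the $\prod_{l=1}^{j}$ factor matches exactly the grouping in hypothesis~\eqref{eq:modified-Stenger-bound-in-disc}, which immediately supplies the uniform bound $\tilde{C}_1 h^{-m}$.

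Third, I would control the contour factor $J^{(|\lambda|)}(\phi^{-1}(t))$ by exchanging the derivative with the contour integral and estimating $|w-\phi^{-1}(t)|\geq d$ on the horizontal parts of $\partial\domD_d$ (since $\phi^{-1}(t)\in\mathbb{R}$) and $|\sin(\pi w/h)|\geq\sinh(\pi d/h)$ there; this gives $|J^{(|\lambda|)}(\phi^{-1}(t))|\leq \text{const}\cdot\mathcal{N}_1(F,d)/\sinh(\pi d/h)$ uniformly in $t$. Since there are only finitely many $(j,\lambda)$ with bounded combinatorial weights, absorbing these into a single constant $\tilde{C}_2$ yields the claimed bound $\tilde{C}_2\mathcal{N}_1(F,d)/(h^m\sinh(\pi d/h))$.

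The main obstacle I expect is the combinatorial bookkeeping in the second step: one must verify that \emph{every} derivative product produced by Leibniz combined with Fa\`a di Bruno falls under some admissible $(j,\lambda)$ grouping matching the hypothesis, including the endpoint cases $j=0$ (all derivatives on the sine-weight factor) and $j=m$ (all derivatives on the composition). A secondary but essential technical point is the rigorous justification of the $\mathbf{H}^1$ contour representation, in particular the vanishing of the vertical sides of $\partial\domD_d(\epsilon)$ as $\epsilon\to0$, which is what motivates the precise definition of $\mathcal{N}_1(F,d)$.
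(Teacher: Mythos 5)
This theorem is quoted from Okayama and Tanaka and the paper gives no proof of it, so there is nothing internal to compare against; your reconstruction is the standard argument and appears correct. The contour-integral representation of the Sinc error in $\mathbf{H}^1(\domD_d)$, followed by Leibniz on the product $g(t)\sin(\pi\phi^{-1}(t)/h)\cdot J(\phi^{-1}(t))$ and Fa\`a di Bruno on $J\circ\phi^{-1}$, produces exactly the groupings that hypothesis~\eqref{eq:modified-Stenger-bound-in-disc} is tailored to bound, and the uniform estimate $|J^{(k)}(\zeta)|\leq \mathrm{const}\cdot\mathcal{N}_1(F,d)/\sinh(\pi d/h)$ for real $\zeta$ follows from $|w-\zeta|\geq d$ and $|\sin(\pi w/h)|\geq\sinh(\pi d/h)$ on the horizontal boundary, so the pieces assemble into the claimed bound.
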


The most difficult point
to use this theorem
is showing the condition~\eqref{eq:modified-Stenger-bound-in-disc}.
We slightly change the condition~\eqref{eq:modified-Stenger-bound-in-disc}
as follows,
which can be derived easily by using
$\sin\theta = (\E^{\I\theta}-\E^{-\I\theta})/(2\I)$.

\begin{theorem}
\label{thm:discretization-error}
Let $d>0$, let $\phi$ be a conformal map that maps $\domD_d$ onto
a complex domain $\domD$ containing $(a,b)$,
and let $F(\zeta)=f(\phi(\zeta))/g(\phi(\zeta))$.
Assume that $F\in\mathbf{H}^1(\domD_d)$.
Moreover,
assume that there exists a positive constant $\tilde{C}_1$,
for all nonnegative integers $\{\lambda_l\}_{l=0}^{j}$ satisfying
$\lambda_0=0$ and $\sum_{l=1}^{j} l \lambda_l = j$, it holds that
\begin{equation}
\sup_{\substack{t\in(a, b)\\ s\in [-\pi/h,\pi,h]}}
\left|
\left(\frac{\uppartial}{\uppartial t}\right)^{m-j}
\left\{g(t)\E^{\I s \phi^{-1}(t)}\right\}
\prod_{l=0}^{j}\left[\left(\phi^{-1}(t)\right)^{(l)}\right]^{\lambda_l}
\right|
\leq \tilde{C}_1 h^{-m}
\quad (j=0,\,1,\,2,\,\ldots,\,m).
\label{eq:modified-TK-bound-in-disc}
\end{equation}
Then, there exists a constant $\tilde{C}_2$ independent of $h$
such that~\eqref{eq:discretization-error} holds.
\end{theorem}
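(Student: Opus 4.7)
The plan is to reduce the new hypothesis~\eqref{eq:modified-TK-bound-in-disc} to the Stenger-type hypothesis~\eqref{eq:modified-Stenger-bound-in-disc} already handled in the Okayama--Tanaka theorem cited immediately above, and then invoke that theorem verbatim. The only mechanism needed is the Euler identity $\sin\theta = (\E^{\I\theta}-\E^{-\I\theta})/(2\I)$ highlighted in the statement. Applied with $\theta = \pi\phi^{-1}(t)/h$, it gives
\[
 g(t)\sin\!\left(\frac{\pi\phi^{-1}(t)}{h}\right)
= \frac{1}{2\I}\left\{g(t)\E^{\I(\pi/h)\phi^{-1}(t)} - g(t)\E^{-\I(\pi/h)\phi^{-1}(t)}\right\},
\]
so that the expression appearing inside the $(m-j)$-th derivative in~\eqref{eq:modified-Stenger-bound-in-disc} is nothing but a linear combination, with fixed complex coefficients of absolute value $1/2$, of two instances of the expression appearing in~\eqref{eq:modified-TK-bound-in-disc}, namely the instances $s=\pi/h$ and $s=-\pi/h$.

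First I would differentiate this identity $(m-j)$ times in $t$ and apply the triangle inequality to obtain
\[
\left|\left\{g(t)\sin\!\left(\frac{\pi\phi^{-1}(t)}{h}\right)\right\}^{(m-j)}\right|
\leq \frac{1}{2}\sum_{\sigma\in\{+1,-1\}}
\left|\left(\frac{\uppartial}{\uppartial t}\right)^{m-j}\!\left\{g(t)\E^{\I\sigma(\pi/h)\phi^{-1}(t)}\right\}\right|.
\]
Multiplying both sides by $\prod_{l=0}^{j}[(\phi^{-1}(t))^{(l)}]^{\lambda_l}$ and taking the supremum over $t\in(a,b)$, each of the two summands on the right-hand side is bounded by $\tilde{C}_1 h^{-m}$ in virtue of~\eqref{eq:modified-TK-bound-in-disc} applied at $s=\pm\pi/h$, which are endpoints of the prescribed interval. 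Hence the Stenger-type bound~\eqref{eq:modified-Stenger-bound-in-disc} holds for every admissible multi-index $\{\lambda_l\}_{l=0}^{j}$ with the same constant $\tilde{C}_1$, so the hypotheses of the Okayama--Tanaka theorem are satisfied.

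Having verified~\eqref{eq:modified-Stenger-bound-in-disc} and retaining the hypothesis $F\in\mathbf{H}^1(\domD_d)$ without change, the Okayama--Tanaka theorem applies and delivers the discretization error bound~\eqref{eq:discretization-error} with a constant $\tilde{C}_2$ independent of $h$. There is really no technical obstacle in this argument: it is a two-line reduction enabled entirely by the Euler identity, and the step that might naively be regarded as the hard part — verifying the Stenger-type derivative bound in each of the concrete settings of Theorems~\ref{thm:our-2} and~\ref{thm:our-4} — has been deliberately deferred to the subsequent subsections, where it is recast in the more tractable exponential form supplied by~\eqref{eq:modified-TK-bound-in-disc}.
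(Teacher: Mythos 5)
Your proposal is correct and follows exactly the route the paper intends: the paper's entire justification for Theorem~\ref{thm:discretization-error} is the remark that it "can be derived easily by using $\sin\theta = (\E^{\I\theta}-\E^{-\I\theta})/(2\I)$," and your argument simply writes out that reduction, bounding the sine expression in~\eqref{eq:modified-Stenger-bound-in-disc} by the two instances $s=\pm\pi/h$ of~\eqref{eq:modified-TK-bound-in-disc} and then invoking the Okayama--Tanaka theorem. No gaps.
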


We use this theorem
to estimate the discretization error
(setting $g=g_i$ and $\phi=\phi_i$ ($i=2,\,4$)).
To estimate the truncation error,
we use the following lemma.

\begin{lemma}[Stenger~{\cite[Part of Theorem~4.4.2]{stenger93:_numer}}]
\label{lem:truncation-error}
Let $d>0$, let $\phi$ be a conformal map that maps $\domD_d$ onto
a complex domain $\domD$ containing $(a,b)$,
and let $F(\zeta)=f(\phi(\zeta))/g(\phi(\zeta))$.
Assume that there exist positive constants $R$, $\alpha$ and $\beta$
such that
\begin{equation}
 |F(x)|\leq
\frac{R}{(1 + \E^{-x})^{\alpha}(1 + \E^{x})^{\beta}}
\label{leq:F-exponential-decay-real}
\end{equation}
holds for all $x\in\mathbb{R}$.
Moreover, assume that there exists a constant $\tilde{C}_1$ such that
\begin{equation}
\sup_{\substack{t\in(a, b) \\ s\in [-\pi/h,\pi/h]}}
\left|
\left(\frac{\uppartial}{\uppartial t}\right)^{m}g(t)\E^{\I s\phi^{-1}(t)}
\right|
\leq \tilde{C}_1 h^{-m}.
\label{eq:Stenger-truncate-bound}
\end{equation}
Let $\mu = \min\{\alpha,\beta\}$,
and
let $M$ and $N$ be defined as~\eqref{eq:Def-MN}.
Then, it holds that
\[
\sup_{t\in(a,b)}
\left|
\sum_{k=-\infty}^{-M-1}
\frac{f(\phi(kh))}{g(\phi(kh))}
\left(\frac{\mathrm{d}}{\mathrm{d}t}\right)^m
\left\{
g(t)S(k,h)(\phi^{-1}(t))
\right\}
+\sum_{k=N+1}^{\infty}
\frac{f(\phi(kh))}{g(\phi(kh))}
\left(\frac{\mathrm{d}}{\mathrm{d}t}\right)^m
\left\{
g(t)S(k,h)(\phi^{-1}(t))
\right\}
\right|
\leq
 \tilde{C}_1 h^{-m} \frac{2R}{\mu h}\E^{-\mu n h}.
\]
\end{lemma}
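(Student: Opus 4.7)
The plan is to bound each derivative factor $|(\mathrm{d}/\mathrm{d}t)^m\{g(t)S(k,h)(\phi^{-1}(t))\}|$ uniformly in $k$ and $t$ by $\tilde{C}_1 h^{-m}$, and then to sum the values $|F(kh)|$ over the truncated indices $k\leq -M-1$ and $k\geq N+1$ using the exponential decay hypothesis~\eqref{leq:F-exponential-decay-real}. The first step is where hypothesis~\eqref{eq:Stenger-truncate-bound} enters; the second is a routine geometric-series estimate.

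For the derivative bound, I would rewrite the Sinc factor via its Fourier integral representation, $\operatorname{sinc}(\pi \xi) = \tfrac{1}{2}\int_{-1}^{1}\E^{\I \pi \xi u}\D u$, applied with $\xi = (\phi^{-1}(t) - kh)/h$. After changing variables to $s = \pi u/h\in [-\pi/h,\pi/h]$, this yields
$g(t)\,S(k,h)(\phi^{-1}(t)) = \frac{h}{2\pi}\int_{-\pi/h}^{\pi/h} g(t)\,\E^{\I s \phi^{-1}(t)}\,\E^{-\I s kh}\D s$.
Because $\E^{-\I s kh}$ is constant in $t$, differentiating $m$ times in $t$ under the integral sign and invoking~\eqref{eq:Stenger-truncate-bound} gives the uniform bound $|(\mathrm{d}/\mathrm{d}t)^m\{g(t)S(k,h)(\phi^{-1}(t))\}|\leq \frac{h}{2\pi}\cdot \frac{2\pi}{h}\cdot \tilde{C}_1 h^{-m} = \tilde{C}_1 h^{-m}$, valid for all $t\in(a,b)$ and all $k\in\mathbb{Z}$.

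For the tail summation, at $x = kh$ hypothesis~\eqref{leq:F-exponential-decay-real} gives $|F(kh)|\leq R\E^{-kh\beta}$ for $k\geq 1$ and $|F(kh)|\leq R\E^{kh\alpha}$ for $k\leq -1$, since $(1+\E^{kh})^\beta\geq \E^{kh\beta}$ in the first case and symmetrically in the second. Then
$\sum_{k=N+1}^{\infty}|F(kh)| \leq \frac{R\E^{-(N+1)h\beta}}{1-\E^{-h\beta}} = \frac{R\E^{-Nh\beta}}{\E^{h\beta}-1} \leq \frac{R\E^{-Nh\beta}}{h\beta}\leq \frac{R\E^{-\mu n h}}{h\mu}$,
where the last two steps use $\E^{h\beta}-1\geq h\beta$, $Nh\beta\geq \mu n h$ from~\eqref{eq:Def-MN}, and $\beta\geq \mu$. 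The left tail $\sum_{k=-\infty}^{-M-1}|F(kh)|$ is bounded identically using $\alpha$. Adding the two contributions gives $2R\E^{-\mu n h}/(\mu h)$; multiplying by the uniform derivative bound $\tilde{C}_1 h^{-m}$ from the previous step completes the claim.

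The main obstacle is really just the Fourier representation trick: a direct $t$-differentiation of $S(k,h)(\phi^{-1}(t))$ would produce terms that blow up as $\phi^{-1}(t)\to kh$, so a $k$-uniform derivative bound is not obvious. Hypothesis~\eqref{eq:Stenger-truncate-bound} is phrased precisely so that the integrand $g(t)\E^{\I s\phi^{-1}(t)}$ is controlled uniformly in $s\in[-\pi/h,\pi/h]$; with that in hand, the rest of the argument reduces to elementary geometric estimation and the index inequalities built into~\eqref{eq:Def-MN}.
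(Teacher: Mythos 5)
Your proof is correct, and it is essentially the argument behind the cited source (Stenger's Theorem~4.4.2), which this paper quotes without reproducing a proof: the Fourier-integral representation of the Sinc factor over $s\in[-\pi/h,\pi/h]$ is exactly what hypothesis~\eqref{eq:Stenger-truncate-bound} is tailored to, and your tail estimates via $\E^{h\beta}-1\geq h\beta$ and the index inequalities $N\beta\geq\mu n$, $M\alpha\geq\mu n$ from~\eqref{eq:Def-MN} reproduce the stated constant $2R/(\mu h)$ exactly. No gaps.
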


Combining Theorem~\ref{thm:discretization-error}
and Lemma~\ref{lem:truncation-error},
setting $h$ as~\eqref{eq:Def-h},
we have
\begin{align*}
  \left|
 f^{(m)}(t)
- \sum_{k=-M}^N \frac{f(\phi_i(kh))}{g_i(\phi_i(kh))}
\left(\frac{\mathrm{d}}{\mathrm{d}t}\right)^m
\left\{
g_i(t)S(k,h)(\phi_i^{-1}(t))
\right\}
\right|
&\leq \frac{2\tilde{C}_2 \mathcal{N}_1(F,d)}{1 - \E^{-2\pi d/h}}
h \cdot h^{-m-1}\E^{-\pi d/h}
+\frac{2 \tilde{C}_1 R}{\mu} h^{-m-1}\E^{-\mu n h}\\
&= 2\left\{
\frac{\tilde{C}_2\mathcal{N}_1(F,d)}{1 - \E^{-2\sqrt{\pi d \mu n}}}
\sqrt{\frac{\pi d}{\mu n}}
+ \frac{\tilde{C}_1 R}{\mu}
\right\}
\left(\sqrt{\frac{\mu n}{\pi d}}\right)^{m+1}\E^{-\sqrt{\pi d \mu n}}\\
&\leq 2\left\{
\frac{\tilde{C}_2\mathcal{N}_1(F,d)}{1 - \E^{-2\sqrt{\pi d \mu}}}
\sqrt{\frac{\pi d}{\mu}}
+ \frac{\tilde{C}_1 R}{\mu}
\right\}
\left(\sqrt{\frac{\mu n}{\pi d}}\right)^{m+1}\E^{-\sqrt{\pi d \mu n}},
\end{align*}
from which we obtain the desired error bound.
This completes the proof of Theorems~\ref{thm:our-2}
and~\ref{thm:our-4}.

\subsubsection{Proof of Theorem~\ref{thm:our-2}}
\label{sec:our-2}

First, to use Theorem~\ref{thm:discretization-error},
we should show $F\in\mathbf{H}^1(\domD_d)$
and the inequality~\eqref{eq:modified-TK-bound-in-disc}
with $g=g_2$ and $\phi=\phi_2$.
The first task is completed by the following result.

\begin{lemma}[Okayama et al.~{\cite[Lemma~4.4]{OkaShinKatsu}}]
Assume that $\tilde{f}$ is analytic in $\phi_2(\domD_d)$ with $0<d<\pi$,
and that there exist positive constants $K$, $\alpha$ and $\beta$
such that
\[
 \left|\tilde{f}(z)\right|
\leq K \left|\frac{z}{1+z}\right|^{\alpha}|\E^{-z}|^{\beta}
\]
holds for $z\in\phi_2(\domD_d)$.
Then, putting $F(\zeta)=\tilde{f}(\phi_2(\zeta))$,
we have $F\in\mathbf{H}^1(\domD_d)$.
\end{lemma}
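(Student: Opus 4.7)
The plan is to verify the two defining conditions of $\mathbf{H}^1(\domD_d)$ for $F(\zeta)=\tilde{f}(\phi_2(\zeta))$: analyticity on $\domD_d$, and finiteness of the norm $\mathcal{N}_1(F,d)$. Analyticity is immediate from the composition $F=\tilde{f}\circ\phi_2$: the map $\zeta\mapsto 1+\E^\zeta$ avoids zero throughout $\domD_d$ whenever $d<\pi$ (the nearest zeros lie at $\zeta=\pm\I\pi$), so $\phi_2(\zeta)=\log(1+\E^\zeta)$ is analytic on $\domD_d$ and takes values in $\phi_2(\domD_d)$, where $\tilde{f}$ is analytic by hypothesis. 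The real work is therefore to bound $|F|$ along the contour $\partial\domD_d(\epsilon)$ so that the contour integral converges as $\epsilon\to 0$.

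First I would substitute $z=\phi_2(\zeta)=\log(1+\E^\zeta)$ into the hypothesized inequality, which yields the closed expression $|\E^{-z}|^{\beta}=|1+\E^\zeta|^{-\beta}$ and leaves $|z/(1+z)|^{\alpha}$ to be handled by the asymptotics of $\log(1+\E^\zeta)$ as $\Re\zeta\to\pm\infty$. For $\Re\zeta\to-\infty$ one has $\log(1+\E^\zeta)\sim\E^\zeta$ uniformly in $|\Im\zeta|\le d(1-\epsilon)$, giving a bound of the form $|F(\zeta)|\le C\E^{\alpha\Re\zeta}$. For $\Re\zeta\to+\infty$ one writes $\log(1+\E^\zeta)=\zeta+\log(1+\E^{-\zeta})$, so $|z/(1+z)|$ stays bounded while $|1+\E^\zeta|^{-\beta}\le C'\E^{-\beta\Re\zeta}$, yielding $|F(\zeta)|\le C''\E^{-\beta\Re\zeta}$.

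Equipped with these two-sided exponential estimates on horizontal lines $\Im\zeta=\pm d(1-\epsilon)$, the integrals over the top and bottom edges of $\partial\domD_d(\epsilon)$ are dominated by an $\epsilon$-independent integrable function of $\Re\zeta$, so the dominated convergence theorem provides finite limits as $\epsilon\to 0$. On the vertical edges $\Re\zeta=\pm 1/\epsilon$, the same pointwise bounds give $|F(\zeta)|\le \tilde{C}\E^{-\mu/\epsilon}$ with $\mu=\min\{\alpha,\beta\}$, and since each vertical edge has length at most $2d$, these contributions vanish as $\epsilon\to 0$. Summing the four edge estimates proves $\mathcal{N}_1(F,d)<\infty$, which together with analyticity places $F$ in $\mathbf{H}^1(\domD_d)$.

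The main obstacle is making the asymptotic estimates of $\phi_2(\zeta)$ and $|1+\E^\zeta|$ uniform across the full strip width $d<\pi$; concretely, one must verify that $|1+\E^\zeta|$ stays bounded away from zero on every closed substrip of $\domD_d$ (so that the branch of the logarithm and the fraction $z/(1+z)$ are controlled by elementary functions of $\Re\zeta$ alone) and that the implicit constants depend only on $d$ and $\alpha,\beta$. Once this uniformity is secured, the remainder reduces to routine one-dimensional integration of exponentially decaying bounds.
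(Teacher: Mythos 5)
The paper itself does not prove this lemma---it is imported verbatim from \cite{OkaShinKatsu}---but your argument is correct and follows the same route as the proof in that reference: analyticity of $F=\tilde{f}\circ\phi_2$ from $1+\E^{\zeta}\neq 0$ on the simply connected strip, pull-back of the decay condition to uniform bounds of order $\E^{\alpha\Re\zeta}$ on the left and $\E^{-\beta\Re\zeta}$ on the right half of $\overline{\domD_d}$, and then dominated convergence on the horizontal edges plus vanishing vertical edges of $\partial\domD_d(\epsilon)$. The uniformity issues you flag at the end are exactly the ones that matter, and they resolve precisely because $d<\pi$ keeps both $1+\E^{\zeta}$ and $1+\log(1+\E^{\zeta})$ bounded away from zero on the closed strip, so your proposal is essentially the intended proof.
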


Setting $\tilde{f}(z)=f(z)/g_2(z)$ in this lemma, we obtain
$F\in\mathbf{H}^1(\domD_d)$.
For the second task, we prepare the following theorem,
two propositions and two lemmas.

\begin{theorem}[Fa\`{a} di Bruno's formula (cf. Johnson~{\cite{johnson02:_faa_brunos}})]
\label{thm:Faa-di-Bruno}
Assume that $f$ and $\tilde{f}$ are
$j$ times continuously differentiable.
Then, it holds that
\begin{align*}
 \left(\frac{\mathrm{d}}{\mathrm{d} t}\right)^{j}
f(\tilde{f}(t))
=\sum \frac{j!}{k_1!k_2!\cdots k_j!}
f^{(k_1+k_2+\cdots+k_j)}(\tilde{f}(t))
\prod_{l=1}^j \left(\frac{\tilde{f}^{(l)}(t)}{l!}\right)^{k_l},
\end{align*}
where the sum is over all different solutions in
nonnegative integers $k_1$, $k_2$, \ldots, $k_j$ of
\begin{equation}
 1\cdot k_1 + 2\cdot k_2 + \cdots + j\cdot k_j = j.
\label{eq:sum-of-j-kj}
\end{equation}
\end{theorem}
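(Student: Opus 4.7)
The plan is to prove Fa\`{a} di Bruno's formula by induction on the order of differentiation $j$. The base case $j=1$ is immediate: the unique nonnegative integer solution of $1\cdot k_1 = 1$ is $k_1 = 1$, so the right-hand side reduces to $\frac{1!}{1!}f'(\tilde{f}(t))(\tilde{f}'(t)/1!)^1 = f'(\tilde{f}(t))\tilde{f}'(t)$, which is the ordinary chain rule.

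For the inductive step, I would differentiate both sides of the assumed identity for $j$ once more and apply the product rule. Two kinds of contributions appear: differentiating the outer factor $f^{(k_1+\cdots+k_j)}(\tilde{f}(t))$ introduces an additional $\tilde{f}'(t)$ and raises the order of $f$ by one; differentiating one of the inner factors $(\tilde{f}^{(l)}(t)/l!)^{k_l}$ decreases $k_l$ by one and, after the rescaling $\tilde{f}^{(l+1)}(t)/l! = (l+1)\tilde{f}^{(l+1)}(t)/(l+1)!$, effectively increases $k_{l+1}$ by one. Reindexing the resulting terms by a tuple $(k_1',\ldots,k_{j+1}')$ satisfying $1\cdot k_1' + 2\cdot k_2' + \cdots + (j+1)\cdot k_{j+1}' = j+1$, one must verify that the coefficients of each reindexed term add up to $(j+1)!/(k_1'!k_2'!\cdots k_{j+1}'!)$.

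The main obstacle is precisely this combinatorial reshuffling of coefficients: one has to identify all parent tuples $(k_1,\ldots,k_j)$ that map, under either of the two operations above, onto a given child tuple $(k_1',\ldots,k_{j+1}')$ and show that their weighted contributions telescope to the stated multinomial-type coefficient. A route I would probably prefer, because it avoids this bookkeeping, is the set-partition proof. One first establishes by a clean induction that $(\mathrm{d}/\mathrm{d}t)^{j}f(\tilde{f}(t)) = \sum_{\pi}f^{(|\pi|)}(\tilde{f}(t))\prod_{B\in\pi}\tilde{f}^{(|B|)}(t)$, where $\pi$ ranges over set partitions of $\{1,\ldots,j\}$; the inductive step is transparent, since when one differentiates, the new index $j+1$ is either placed into a new singleton block (corresponding to differentiating the outer $f$) or appended to one of the existing blocks. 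Grouping these partitions by their multiset of block sizes and applying the standard count $j!/\prod_{l=1}^{j}(l!)^{k_l}k_l!$ for the number of set partitions of $\{1,\ldots,j\}$ with block-size type $(k_1,\ldots,k_j)$ then recovers precisely the stated coefficient $j!/(k_1!k_2!\cdots k_j!)$.
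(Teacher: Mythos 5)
The paper does not prove this statement: it is quoted as a classical result, with a pointer to Johnson's survey, and is used as a black box in the proofs of Lemmas~\ref{lem:bound-exp-phi-inv}, \ref{lem:bound-exp-phi-inv-4} and Proposition~\ref{prop:bound-phi-4-inv}. So there is no in-paper argument to compare against; what can be assessed is only whether your proof stands on its own, and it does. The set-partition route you settle on is complete in outline: the identity $\left(\mathrm{d}/\mathrm{d}t\right)^{j}f(\tilde{f}(t))=\sum_{\pi}f^{(|\pi|)}(\tilde{f}(t))\prod_{B\in\pi}\tilde{f}^{(|B|)}(t)$ follows by the induction you describe, since upon differentiating, the new index $j+1$ either forms a singleton block (from the chain-rule factor attached to the outer $f^{(|\pi|)}$) or is appended to an existing block $B$ (from differentiating $\tilde{f}^{(|B|)}$), and every partition of $\{1,\ldots,j+1\}$ is obtained exactly once from its restriction to $\{1,\ldots,j\}$. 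Grouping by block-size type $(k_1,\ldots,k_j)$ with the count $j!/\prod_{l=1}^{j}(l!)^{k_l}k_l!$ and rewriting $\prod_{l}(\tilde{f}^{(l)})^{k_l}=\prod_{l}(l!)^{k_l}(\tilde{f}^{(l)}/l!)^{k_l}$ cancels the $(l!)^{k_l}$ factors and yields exactly the coefficient $j!/(k_1!k_2!\cdots k_j!)$ in the statement, with the sum correctly restricted to types satisfying~\eqref{eq:sum-of-j-kj}. Your first sketch (direct induction on the multinomial form) would also succeed, but as you correctly identify, it forces the telescoping-coefficient bookkeeping that the partition argument sidesteps; choosing the latter is the right call. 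The $C^{j}$ hypothesis is precisely what the repeated product and chain rules require, so the regularity is accounted for as well.
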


\begin{proposition}
\label{prop:bound-g2-deriv}
For any nonnegative integer $j$, there exists a positive constant $C_j$
depending only on $j$ such that
\[
 \left|g_2^{(j)}(t)\right| \leq C_j (1 - \E^{-t})^{m-j}
\]
holds for all $t\in (0,\infty)$.
\end{proposition}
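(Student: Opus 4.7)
The plan is to apply Faà di Bruno's formula (Theorem~\ref{thm:Faa-di-Bruno}) to the composition $g_2(t)=h(\tilde{f}(t))$ with outer function $h(u)=u^m$ and inner function $\tilde{f}(t)=1-\E^{-t}$. These factors have particularly simple derivatives: for any $K\geq 0$,
\[
 h^{(K)}(u)=m(m-1)\cdots(m-K+1)\,u^{m-K},
\]
and for any $l\geq 1$, $\tilde{f}^{(l)}(t)=(-1)^{l+1}\E^{-t}$. Substituting into Faà di Bruno yields
\[
 g_2^{(j)}(t)=\sum\frac{j!}{k_1!\cdots k_j!}\,m(m-1)\cdots(m-K+1)\,(1-\E^{-t})^{m-K}\prod_{l=1}^{j}\left(\frac{(-1)^{l+1}\E^{-t}}{l!}\right)^{k_l},
\]
where the sum is over all nonnegative integer solutions of~\eqref{eq:sum-of-j-kj} and $K=k_1+k_2+\cdots+k_j$.

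After taking absolute values, I would dispose of the easy factors first. The product over $l$ equals $\E^{-tK}\prod_{l=1}^{j}(1/l!)^{k_l}$, whose $t$-dependent part is bounded by $1$ on $(0,\infty)$, and $|m(m-1)\cdots(m-K+1)|$ is a constant depending only on $m$ and $K$. The only factor that needs attention is $(1-\E^{-t})^{m-K}$, and the key observation is that each admissible multi-index satisfies
\[
 K=k_1+k_2+\cdots+k_j\leq k_1+2k_2+\cdots+jk_j=j.
\]
Since $1-\E^{-t}\in(0,1)$ for $t\in(0,\infty)$ and $m-K\geq m-j$, it follows that $(1-\E^{-t})^{m-K}\leq(1-\E^{-t})^{m-j}$.

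Combining these bounds, every term in the Faà di Bruno sum is dominated by a constant (depending only on $j$ and $m$) times $(1-\E^{-t})^{m-j}$. Taking $C_j$ to be the finite sum of these constants over all admissible multi-indices yields the claim. There is no genuine obstacle here; the only step that requires any care is the direction of the inequality $(1-\E^{-t})^{m-K}\leq(1-\E^{-t})^{m-j}$, which goes the right way precisely because $1-\E^{-t}$ lies strictly below $1$ and $K\leq j$.
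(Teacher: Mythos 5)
Your proof is correct, but it follows a genuinely different route from the paper's. The paper proves by induction on $j$ that $g_2^{(j)}(t) = (a_1\E^{-t} + a_2\E^{-2t} + \cdots + a_j\E^{-jt})(1-\E^{-t})^{m-j}$ for suitable constants $a_1,\ldots,a_j$, so that every term already carries the target power $(1-\E^{-t})^{m-j}$ exactly, and the exponential-polynomial prefactor is then trivially bounded by $|a_1|+\cdots+|a_j|$ on $(0,\infty)$. You instead invoke Fa\`{a} di Bruno's formula for the composition $u\mapsto u^m$ with $t\mapsto 1-\E^{-t}$, which produces terms carrying the varying power $(1-\E^{-t})^{m-K}$ with $K=k_1+\cdots+k_j\leq j$, and you then lower the exponent to $m-j$ using the fact that the base lies in $(0,1)$ --- the one step in your argument that genuinely requires care, and which you correctly identify and justify. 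Each approach has something to recommend it: the paper's inductive identity gives the uniform power in one stroke but is only asserted (``which can be shown by induction''), whereas your argument is fully self-contained given Theorem~\ref{thm:Faa-di-Bruno}, which the paper already states and uses elsewhere, and the derivatives of both composed factors are immediate. Two cosmetic remarks: the case $j=0$ sits outside the Fa\`{a} di Bruno machinery and should be dispatched separately (as the paper does), though it is trivial since $g_2(t)=(1-\E^{-t})^m$; and when $K>m$ the falling factorial $m(m-1)\cdots(m-K+1)$ vanishes, so those terms drop out and the formally negative exponent $m-K$ causes no harm since $1-\E^{-t}>0$ on the interval. Neither point is a gap.
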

\begin{proof}
The claim clearly holds for $j=0$.
For any positive integer $j$, there exist constants
$a_1,\,a_2,\,\ldots,a_j$ such that
\[
 g_2^{(j)}(t) = (a_1 \E^{-t} + a_2 \E^{-2t} + \cdots + a_j \E^{-jt})
(1 - \E^{-t})^{m - j},
\]
which can be shown by induction.
Thus, the claim also holds for $j\geq 1$.
\end{proof}

\begin{proposition}
\label{prop:bound-phi-inv-deriv}
For any positive integer $j$, there exists a positive constant $C_j$
depending only on $j$ such that
\[
 \left|\left\{\phi_2^{-1}(t)\right\}^{(j)}\right|
 \leq C_j (1 - \E^{-t})^{-j}
\]
holds for all $t\in (0,\infty)$.
\end{proposition}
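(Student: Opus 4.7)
The plan is to write $\phi_2^{-1}(t) = \log(\E^t-1)$ and show by induction on $j$ that $\{\phi_2^{-1}(t)\}^{(j)}$ is a polynomial (with no constant term) of degree at most $j$ in the variable $1/(1-\E^{-t})$. Once this structural claim is established, the desired bound follows immediately from the elementary observation that $0<1-\E^{-t}<1$ on $(0,\infty)$, so $(1-\E^{-t})^{-k}\leq (1-\E^{-t})^{-j}$ whenever $1\leq k\leq j$.

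For the base case, a direct computation gives
\[
 \{\phi_2^{-1}(t)\}' = \frac{\E^t}{\E^t-1} = \frac{1}{1-\E^{-t}},
\]
which has the required form with $j=1$. For the inductive step, set $u=1-\E^{-t}$, so that $\mathrm{d}u/\mathrm{d}t = \E^{-t} = 1-u$, and differentiation in $t$ corresponds to the operator $(1-u)\,\mathrm{d}/\mathrm{d}u$ acting on functions of $u$. Assuming $\{\phi_2^{-1}(t)\}^{(j)} = \sum_{k=1}^{j} a_k^{(j)} u^{-k}$, one application of $(1-u)\,\mathrm{d}/\mathrm{d}u$ yields
\[
 \{\phi_2^{-1}(t)\}^{(j+1)} = \sum_{k=1}^{j} (-k a_k^{(j)}) u^{-k-1} + \sum_{k=1}^{j} (k a_k^{(j)}) u^{-k},
\]
which is again a polynomial in $u^{-1}$ of degree $j+1$ with no constant term, completing the induction.

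From the representation $\{\phi_2^{-1}(t)\}^{(j)} = \sum_{k=1}^{j} a_k^{(j)} (1-\E^{-t})^{-k}$, we estimate
\[
 \bigl|\{\phi_2^{-1}(t)\}^{(j)}\bigr|
 \leq \sum_{k=1}^{j} |a_k^{(j)}|(1-\E^{-t})^{-k}
 \leq \left(\sum_{k=1}^{j} |a_k^{(j)}|\right)(1-\E^{-t})^{-j},
\]
so the proposition holds with $C_j=\sum_{k=1}^{j}|a_k^{(j)}|$, a constant depending only on $j$.

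There is no genuine obstacle here; the only care needed is in setting up the chain-rule substitution $u=1-\E^{-t}$ cleanly so that the induction reduces to the purely algebraic verification that applying $(1-u)\,\mathrm{d}/\mathrm{d}u$ raises the maximal pole order at $u=0$ by exactly one without introducing a constant term.
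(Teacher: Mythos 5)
Your proof is correct and follows essentially the same strategy as the paper: an induction establishing an explicit closed form for $\{\phi_2^{-1}(t)\}^{(j)}$, followed by an elementary bound. The only cosmetic difference is the normal form used (the paper writes the $j$-th derivative as a polynomial in $\E^{-t}$ with no constant term divided by $(1-\E^{-t})^{j}$, whereas you expand in powers of $(1-\E^{-t})^{-1}$ via the substitution $u=1-\E^{-t}$), and both forms yield the stated bound immediately.
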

\begin{proof}
The claim clearly holds for $j=1$.
For any positive integer $j\geq 2$, there exist constants
$a_2,\,a_3,\,\ldots,a_{j-2}$ such that
\[
 \left\{\phi_2^{-1}(t)\right\}^{(j)}
 = (-1)^{j-1}(\E^{-t} + a_2 \E^{-2t} + a_3 \E^{-3t} + \cdots
 + a_{j-2}\E^{-(j-2)t} +  \E^{-(j-1)t})
(1 - \E^{-t})^{- j},
\]
which can be shown by induction.
Thus, the claim also holds for $j\geq 2$.
\end{proof}

\begin{lemma}
\label{lem:bound-prod-phi-inv-deriv}
Let $j$ be a nonnegative integer,
and $\{\lambda_l\}_{l=0}^{j}$ be nonnegative integers
satisfying $\lambda_0=0$ and $\sum_{l=1}^{j} l \lambda_l = j$.
Then, there exists a positive constant $C_j$
depending only on $j$ such that
\[
\left|
\prod_{l=0}^j\left[\left\{\phi_2^{-1}(t)\right\}^{(l)}\right]^{\lambda_l}
\right|
\leq C_j (1 - \E^{-t})^{-j}
\]
holds for all $t\in (0, \infty)$.
\end{lemma}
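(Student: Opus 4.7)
The plan is to reduce the lemma directly to Proposition~\ref{prop:bound-phi-inv-deriv} by bounding each factor in the product separately and then exploiting the constraint $\sum_{l=1}^{j} l\lambda_l = j$ to collapse the exponents.

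First, I would note that since $\lambda_0 = 0$, the factor at $l=0$ equals $1$ and can be dropped, so the product is effectively over $l = 1,\,\ldots,\,j$. For each such $l$ with $\lambda_l > 0$, Proposition~\ref{prop:bound-phi-inv-deriv} supplies a constant $C_l'$ (depending only on $l$) such that
\[
\left|\{\phi_2^{-1}(t)\}^{(l)}\right| \leq C_l' (1-\E^{-t})^{-l}, \quad t\in(0,\infty).
\]
Raising this inequality to the power $\lambda_l$ and taking the product over $l = 1,\,\ldots,\,j$ yields
\[
\left|\prod_{l=0}^{j}\bigl[\{\phi_2^{-1}(t)\}^{(l)}\bigr]^{\lambda_l}\right|
\leq \left(\prod_{l=1}^{j}(C_l')^{\lambda_l}\right)(1-\E^{-t})^{-\sum_{l=1}^{j} l\lambda_l}
= \left(\prod_{l=1}^{j}(C_l')^{\lambda_l}\right)(1-\E^{-t})^{-j},
\]
where the constraint $\sum_{l=1}^{j} l\lambda_l = j$ is used in the last equality.

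Finally, since there are only finitely many tuples $(\lambda_1,\,\ldots,\,\lambda_j)$ of nonnegative integers satisfying this constraint, I would take $C_j$ to be the maximum of $\prod_{l=1}^{j}(C_l')^{\lambda_l}$ over all such tuples. This $C_j$ depends only on $j$ and yields the claimed bound. There is essentially no obstacle here; the only mildly delicate point is verifying that the $l=0$ factor contributes trivially (handled by $\lambda_0 = 0$) and that the resulting constant is uniform in the choice of partition, which follows from the finiteness of the index set.
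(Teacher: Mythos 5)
Your proof is correct and follows essentially the same route as the paper: apply Proposition~\ref{prop:bound-phi-inv-deriv} factor by factor, raise to the power $\lambda_l$, and use $\sum_{l=1}^{j} l\lambda_l = j$ to collapse the exponents. Your final step of maximizing over the finitely many admissible tuples is a small extra refinement (the paper simply writes the product $C_1^{\lambda_1}\cdots C_j^{\lambda_j}$), but it addresses the same point and does not change the argument.
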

\begin{proof}
From Proposition~\ref{prop:bound-phi-inv-deriv},
there exists a positive constant $C_{l}$
such that
\[
\left|
\left[\left\{\phi_2^{-1}(t)\right\}^{(l)}\right]^{\lambda_l}
\right|
\leq \left[ C_{l} (1 - \E^{-t})^{-l} \right]^{\lambda_l}
= C_{l}^{\lambda_l} (1 - \E^{-t})^{-l\lambda_l}
\]
holds for all $t\in (0,\infty)$.
Thus, using $\lambda_0=0$ and $\sum_{l=1}^{j} l \lambda_l = j$,
we have
\begin{align*}
\left|
\prod_{l=0}^j\left[\left\{\phi_2^{-1}(t)\right\}^{(l)}\right]^{\lambda_l}
\right|
\leq
C_{1}^{\lambda_1}C_{2}^{\lambda_2}\cdots C_{j}^{\lambda_j}
(1 - \E^{-t})^{-(1\lambda_1 + 2\lambda_2 + \cdots + l\lambda_l)}
=C_{1}^{\lambda_1}C_{2}^{\lambda_2}\cdots C_{j}^{\lambda_j}
(1 - \E^{-t})^{-j},
\end{align*}
which shows the claim.
\end{proof}

\begin{lemma}
\label{lem:bound-exp-phi-inv}
Let a positive real number $H$ be given, and let $h\in(0,H]$.
Then, there exists a positive constant $C_{j,H}$
depending only on $j$ and $H$ such that
\[
\sup_{-\pi/h\leq s\leq \pi/h}
\left|
\left\{
\E^{\I s \phi_2^{-1}(t)}
\right\}^{(j)}
\right|
\leq C_{j,H} h^{-j} (1 - \E^{-t})^{-j}
\quad (j=0,\,1,\,2,\,\ldots,\,m)
\]
holds for all $t\in(0,\infty)$.
\end{lemma}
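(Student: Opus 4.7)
The plan is to reduce the statement to Fa\`{a} di Bruno's formula (Theorem~\ref{thm:Faa-di-Bruno}) applied to the composition $\E^{\I s u}\circ \phi_2^{-1}$, and then invoke Lemma~\ref{lem:bound-prod-phi-inv-deriv} for the product of derivatives of $\phi_2^{-1}$. First I would handle the case $j=0$ separately: for $t\in(0,\infty)$ the value $\phi_2^{-1}(t)=\log(\E^t-1)$ is real, so $|\E^{\I s \phi_2^{-1}(t)}|=1$, and the claim holds with $C_{0,H}=1$.

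For $j\geq 1$, I apply Fa\`{a} di Bruno's formula with outer function $u\mapsto \E^{\I s u}$, whose $k$-th derivative equals $(\I s)^{k}\E^{\I s u}$, and inner function $\phi_2^{-1}$. This writes $(\mathrm{d}/\mathrm{d}t)^{j}\E^{\I s\phi_2^{-1}(t)}$ as a finite sum, indexed by nonnegative integers $k_1,\ldots,k_j$ with $\sum_{l=1}^{j}l\,k_l=j$, of terms of the form
\[
\frac{j!}{k_1!\cdots k_j!}(\I s)^{k_1+\cdots+k_j}\,\E^{\I s\phi_2^{-1}(t)}
\prod_{l=1}^{j}\Bigl(\frac{\{\phi_2^{-1}(t)\}^{(l)}}{l!}\Bigr)^{k_l}.
\]
Writing $\sigma=k_1+\cdots+k_j$, the factor $\E^{\I s\phi_2^{-1}(t)}$ has modulus one, and the product of derivatives is controlled, up to a constant depending only on $j$, by $(1-\E^{-t})^{-j}$ via Lemma~\ref{lem:bound-prod-phi-inv-deriv}, by taking $\lambda_0=0$ and $\lambda_l=k_l$ for $l\geq 1$, so that the constraint $\sum_{l=1}^{j}l\lambda_l=j$ is exactly the Fa\`{a} di Bruno relation.

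The one non-trivial bookkeeping step, and the main obstacle I anticipate, is converting the factor $|s|^{\sigma}$ into the required $h^{-j}$, since in general $\sigma$ can be strictly smaller than $j$. Using $\sigma\leq \sum_{l=1}^{j}l\,k_l=j$ together with $|s|\leq \pi/h$, I would estimate
\[
|s|^{\sigma}\leq \Bigl(\frac{\pi}{h}\Bigr)^{\sigma}
=\pi^{\sigma}\,h^{j-\sigma}\cdot h^{-j}
\leq \pi^{\sigma}\,H^{j-\sigma}\cdot h^{-j},
\]
where the hypothesis $h\in(0,H]$ and $j-\sigma\geq 0$ are used to absorb the shortfall between $\sigma$ and $j$ into a constant depending only on $j$ and $H$. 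This is precisely the place where the restriction $h\leq H$ enters the argument.

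Finally, since the index set appearing in Fa\`{a} di Bruno's formula is finite and depends only on $j$, summing the resulting constants yields a single $C_{j,H}$ depending only on $j$ and $H$ that dominates the derivative uniformly in $s\in[-\pi/h,\pi/h]$ and in $t\in(0,\infty)$. Apart from the conversion trick above, the argument is a mechanical combination of Fa\`{a} di Bruno's formula with the already-established Lemma~\ref{lem:bound-prod-phi-inv-deriv}, so I expect no further difficulty.
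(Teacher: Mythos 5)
Your proposal is correct and follows essentially the same route as the paper: Fa\`{a} di Bruno's formula with outer function $\E^{\I s u}$, the bound $|s|^{\sigma}\leq \pi^{\sigma}H^{j-\sigma}h^{-j}$ (your $\sigma$ is the paper's $K_j$) to absorb the shortfall using $h\leq H$, and Lemma~\ref{lem:bound-prod-phi-inv-deriv} for the product of derivatives of $\phi_2^{-1}$. The only cosmetic difference is that you treat $j=0$ separately, which the paper handles implicitly.
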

\begin{proof}
We use Fa\`{a} di Bruno's formula (Theorem~\ref{thm:Faa-di-Bruno})
with $f(t)=\E^{\I s t}$ and
$\tilde{f}(t)= \phi_2^{-1}(t)$.
Put $K_j =k_1 + k_2 + \cdots + k_j$,
where $k_1,\,k_2,\,\ldots,\,k_j$ are nonnegative integers
satisfying~\eqref{eq:sum-of-j-kj}.
Then, because $|s|\leq \pi/h$, it holds that
\begin{equation}
 \left|f^{(K_j)}(\tilde{f}(t))\right|
=|\I s|^{K_j}\left|\E^{\I s \tilde{f}(t)}\right| = |s|^{K_j}
\leq \frac{\pi^{K_j}}{h^{K_j}}
=\frac{\pi^{K_j}h^{j - K_j}}{h^{j}}
\leq \frac{\pi^{K_j}H^{j - K_j}}{h^{j}},
\label{eq:f-K_j-bound}
\end{equation}
where
$K_j \leq 1\cdot k_1 + 2\cdot k_2 + \cdots + j\cdot k_j = j$
is used at the last inequality.
Furthermore, setting $k_0 = 0$,
using Lemma~\ref{lem:bound-prod-phi-inv-deriv}
and~\eqref{eq:sum-of-j-kj},
we have
\begin{align*}
\left|
\prod_{l=1}^j \left[\frac{\tilde{f}^{(l)}(t)}{l!}\right]^{k_l}
\right|
\leq \left(\prod_{l=1}^j\frac{1}{(l!)^{k_l}}\right)
\cdot
\prod_{l=1}^j
\left|\tilde{f}^{(l)}(t)\right|^{k_l}
= \left(\prod_{l=1}^j\frac{1}{(l!)^{k_l}}\right)
\cdot
\prod_{l=0}^j
\left|\left\{\phi_2^{-1}(t)\right\}^{(l)}\right|^{k_l}
\leq
\left(\prod_{l=1}^j\frac{1}{(l!)^{k_l}}\right)
\cdot C_j (1 - \E^{-t})^{-j}.
\end{align*}
Combining the above estimates, we obtain the claim.
\end{proof}

Using the results above,
we show~\eqref{eq:modified-TK-bound-in-disc} as follows.

\begin{lemma}
\label{lem:modified-TK-bound-in-disc}
Let a positive real number $H$ be given, and let $h\in(0,H]$.
Let $j$ be a nonnegative integer,
and $\{\lambda_l\}_{l=0}^{j}$ be nonnegative integers
satisfying $\lambda_0=0$ and $\sum_{l=1}^{j} l \lambda_l = j$.
Then, there exists a positive constant $\tilde{C}_1$
such that~\eqref{eq:modified-TK-bound-in-disc} holds with
$(a,b)=(0,\infty)$, $g=g_2$ and $\phi = \phi_2$.
\end{lemma}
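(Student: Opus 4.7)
The plan is to prove the uniform bound by expanding the $(m-j)$-th derivative of the product $g_2(t)\E^{\I s \phi_2^{-1}(t)}$ with the Leibniz rule, and then to bound each resulting factor using the estimates established above. The key algebraic observation driving the proof is that the exponents of $(1-\E^{-t})$ produced by the three separate bounds telescope to zero.

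More concretely, first I would write
\[
\left(\frac{\uppartial}{\uppartial t}\right)^{m-j}\!\bigl\{g_2(t)\E^{\I s \phi_2^{-1}(t)}\bigr\}
=\sum_{i=0}^{m-j}\binom{m-j}{i} g_2^{(i)}(t)\,\bigl\{\E^{\I s \phi_2^{-1}(t)}\bigr\}^{(m-j-i)}.
\]
Then I would apply Proposition~\ref{prop:bound-g2-deriv} to each $g_2^{(i)}(t)$, obtaining a factor $C_i(1-\E^{-t})^{m-i}$, and Lemma~\ref{lem:bound-exp-phi-inv} to each $\{\E^{\I s \phi_2^{-1}(t)}\}^{(m-j-i)}$, obtaining a factor $C_{m-j-i,H}\,h^{-(m-j-i)}(1-\E^{-t})^{-(m-j-i)}$ valid uniformly in $s\in[-\pi/h,\pi/h]$. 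The product of these two estimates then carries $(1-\E^{-t})^{m-i-(m-j-i)}=(1-\E^{-t})^{j}$.

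Next I would invoke Lemma~\ref{lem:bound-prod-phi-inv-deriv} on the remaining product $\prod_{l=0}^{j}[\{\phi_2^{-1}(t)\}^{(l)}]^{\lambda_l}$, which is bounded by $C_j(1-\E^{-t})^{-j}$. Multiplying all three estimates together, the $(1-\E^{-t})^{j}$ and $(1-\E^{-t})^{-j}$ cancel, leaving a pure $h$-power $h^{-(m-j-i)}$ that is independent of $t$. Finally, because $h\in(0,H]$ and $j+i\geq 0$, one has $h^{-(m-j-i)}=h^{j+i}\cdot h^{-m}\leq (\max\{1,H\})^{m}\,h^{-m}$, so the whole sum over $i$ is bounded by $\tilde{C}_1 h^{-m}$ for a constant $\tilde{C}_1$ depending only on $m$, $j$, the multi-index $\{\lambda_l\}$, and $H$. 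Taking the supremum over $t\in(0,\infty)$ and $s\in[-\pi/h,\pi/h]$ concludes the proof.

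The main obstacle I anticipate is purely bookkeeping: one must verify the exponent cancellation $m-i-(m-j-i)-j=0$ holds uniformly over the Leibniz range $0\leq i\leq m-j$, so that no residual factor of $(1-\E^{-t})^{\pm}$ survives to blow up near $t=0$ or $t=\infty$. The other delicate point is keeping track of the $s$-dependence through Lemma~\ref{lem:bound-exp-phi-inv}; fortunately that lemma is already stated uniformly in $s\in[-\pi/h,\pi/h]$, so the dependence drops out once the supremum is taken. Everything else is a direct assembly of the previously proved inequalities.
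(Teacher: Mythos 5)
Your proposal is correct and follows essentially the same route as the paper: a Leibniz expansion of $(\uppartial/\uppartial t)^{m-j}\{g_2(t)\E^{\I s\phi_2^{-1}(t)}\}$, the bounds from Proposition~\ref{prop:bound-g2-deriv} and Lemma~\ref{lem:bound-exp-phi-inv} producing the factor $(1-\E^{-t})^{j}$, cancellation against the $(1-\E^{-t})^{-j}$ from Lemma~\ref{lem:bound-prod-phi-inv-deriv}, and absorption of the leftover positive powers of $h$ using $h\leq H$. The exponent bookkeeping you flag as the main concern does check out exactly as in the paper's own computation.
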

\begin{proof}
Using Proposition~\ref{prop:bound-g2-deriv},
Lemma~\ref{lem:bound-exp-phi-inv}
and the Leibniz rule,
we have
\begin{align*}
\left|
\left(\frac{\uppartial}{\uppartial t}\right)^{m-j}
\left[g_2(t)\E^{\I s\phi_2^{-1}(t)}\right]
\right|
&=\left|
\sum_{k=0}^{m-j}\binom{m-j}{k} g_2^{(m-j-k)}(t)
\left(\E^{\I s \phi_2^{-1}(t)}\right)^{(k)}
\right|\\
&\leq \sum_{k=0}^{m-j}\binom{m-j}{k} C_{m-j-k} (1 - \E^{-t})^{m-(m-j-k)}
C_{k,H} h^{- k} (1 - \E^{-t})^{-k}\\
&= (1 - \E^{-t})^{j} h^{-m}
\sum_{k=0}^{m-j}\binom{m-j}{k} C_{m-j-k} C_{k,H} h^{m-k} \\
&\leq (1 - \E^{-t})^{j} h^{-m}
\sum_{k=0}^{m-j}\binom{m-j}{k} C_{m-j-k} C_{k,H} H^{m-k}.
\end{align*}
Combining the estimate with Lemma~\ref{lem:bound-prod-phi-inv-deriv},
we obtain the claim.
\end{proof}

Thus, we can use Theorem~\ref{thm:discretization-error}
for the discretization error.
Next, to use Lemma~\ref{lem:truncation-error}
for the truncation error,
we should show~\eqref{leq:F-exponential-decay-real}
(the inequality~\eqref{eq:Stenger-truncate-bound} clearly holds
from~\eqref{eq:modified-TK-bound-in-disc},
which is already shown by Lemma~\ref{lem:modified-TK-bound-in-disc}).
For the purpose, the following lemma is useful.

\begin{lemma}[Okayama et al.~{\cite[Lemma~4.7]{OkaShinKatsu}}]
\label{lem:bound-OkaShinKatsu}
It holds for all $x\in\mathbb{R}$ that
\[
 \left|
\frac{\log(1+\E^x)}{1 + \log(1 + \E^x)}
\right|
\leq \frac{1}{1+\E^{-x}}.
\]
\end{lemma}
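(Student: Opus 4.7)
The plan is to substitute $u = \log(1+\E^x)$ and reduce the inequality to the elementary bound $\E^u \geq 1+u$. Since $\E^x > 0$, we have $u = \log(1+\E^x) > 0$ for all real $x$, so the absolute value bars on the left-hand side can be dropped. Then $\E^u = 1 + \E^x$, which rearranges to $\E^{-x} = 1/(\E^u - 1)$.

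Under this substitution, I would rewrite the right-hand side:
\[
\frac{1}{1+\E^{-x}} = \frac{1}{1 + \frac{1}{\E^u - 1}} = \frac{\E^u - 1}{\E^u} = 1 - \E^{-u}.
\]
The left-hand side becomes $u/(1+u)$. Hence the inequality to be proved is
\[
\frac{u}{1+u} \leq 1 - \E^{-u} \quad (u > 0).
\]

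Subtracting both sides from $1$, this is equivalent to $\E^{-u} \leq 1/(1+u)$, i.e. $1+u \leq \E^u$, which is the standard exponential inequality valid for all real $u$ (and in particular for $u > 0$). I would verify it briefly by noting that $\E^u - (1+u)$ has derivative $\E^u - 1 \geq 0$ for $u \geq 0$ and vanishes at $u=0$.

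There is no real obstacle here; the only thing to be careful about is the algebraic simplification of $1/(1+\E^{-x})$ after substitution, and observing that $u > 0$ so that all quantities involved are positive and the reductions are equivalences rather than mere implications.
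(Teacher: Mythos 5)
Your proof is correct. Note that the paper itself gives no proof of this lemma---it is imported verbatim from Okayama et al.\ \cite[Lemma~4.7]{OkaShinKatsu}---so there is no in-paper argument to compare against. Your substitution $u=\log(1+\E^x)$ is clean: $u>0$ justifies dropping the absolute value, the identity $\E^{-x}=1/(\E^u-1)$ turns the right-hand side into $1-\E^{-u}$, and the reduction of $u/(1+u)\leq 1-\E^{-u}$ to $\E^u\geq 1+u$ is a genuine equivalence since all quantities involved are positive. This is a complete, self-contained elementary proof of the cited result.
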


Using this lemma, we show~\eqref{leq:F-exponential-decay-real}
as follows.

\begin{lemma}
Assume that there exist positive constants $K$, $\alpha$ and $\beta$
such that~\eqref{eq:f-g_2-bound}
holds for all $z\in (0,\infty)$.
Let $F(\zeta)=f(\phi_2(\zeta))/g_2(\phi_2(\zeta))$.
Then, there exists a constant $R$
such that~\eqref{leq:F-exponential-decay-real} holds
for all $x\in\mathbb{R}$.
\end{lemma}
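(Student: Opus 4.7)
The plan is to substitute $z = \phi_2(x)$ into the hypothesis~\eqref{eq:f-g_2-bound} and simplify the two factors on the right-hand side by direct computation, using Lemma~\ref{lem:bound-OkaShinKatsu} to handle the nontrivial one.

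First, note that for $x \in \mathbb{R}$, we have $\phi_2(x) = \log(1+\E^x) \in (0,\infty)$, so the hypothesis is applicable with $z = \phi_2(x)$. This yields
\[
|F(x)| = \left|\frac{f(\phi_2(x))}{g_2(\phi_2(x))}\right|
\leq K \left|\frac{\phi_2(x)}{1+\phi_2(x)}\right|^{\alpha}
\bigl|\E^{-\phi_2(x)}\bigr|^{\beta}.
\]

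The second factor simplifies immediately, since by definition of $\phi_2$,
\[
\E^{-\phi_2(x)} = \E^{-\log(1+\E^x)} = \frac{1}{1+\E^x}.
\]
For the first factor, Lemma~\ref{lem:bound-OkaShinKatsu} gives exactly
\[
\left|\frac{\phi_2(x)}{1+\phi_2(x)}\right|
= \left|\frac{\log(1+\E^x)}{1+\log(1+\E^x)}\right|
\leq \frac{1}{1+\E^{-x}}.
\]

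Combining these, we obtain
\[
|F(x)| \leq K \cdot \frac{1}{(1+\E^{-x})^{\alpha}} \cdot \frac{1}{(1+\E^{x})^{\beta}},
\]
so that taking $R = K$ yields~\eqref{leq:F-exponential-decay-real}. There is essentially no obstacle here: the entire content is packaged into the already-established Lemma~\ref{lem:bound-OkaShinKatsu}, and the rest is an algebraic identity.
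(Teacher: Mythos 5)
Your proof is correct and follows essentially the same route as the paper's: substitute $z=\phi_2(x)$ into~\eqref{eq:f-g_2-bound}, simplify $\E^{-\phi_2(x)}=1/(1+\E^x)$ directly, bound the algebraic factor via Lemma~\ref{lem:bound-OkaShinKatsu}, and conclude with $R=K$. No discrepancies.
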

\begin{proof}
From~\eqref{eq:f-g_2-bound} and Lemma~\ref{lem:bound-OkaShinKatsu},
it holds that
\[
|F(x)| =
 \left|
\frac{f(\phi_2(x))}{g_2(\phi_2(x))}
\right|
\leq K \left|\frac{\log(1+\E^x)}{1+\log(1+\E^x)}\right|^{\alpha}
\left|\E^{-\log(1+\E^x)}\right|^{\beta}
\leq K \frac{1}{(1 + \E^{-x})^{\alpha}}
\cdot\frac{1}{(1+\E^x)^{\beta}}.
\]
Hence,~\eqref{leq:F-exponential-decay-real} holds with $R = K$.
\end{proof}

Therefore, we can use Lemma~\ref{lem:truncation-error}
for the truncation error.
Thus, Theorem~\ref{thm:our-2} is established
by combining Theorem~\ref{thm:discretization-error}
and Lemma~\ref{lem:truncation-error}
as outlined in the sketch of the proof.

\subsubsection{Proof of Theorem~\ref{thm:our-4}}
\label{sec:our-4}

In the case of Theorem~\ref{thm:our-4} as well,
we use Theorem~\ref{thm:discretization-error}
and Lemma~\ref{lem:truncation-error}.
First, to use Theorem~\ref{thm:discretization-error},
we should show $F\in\mathbf{H}^1(\domD_d)$
and the inequality~\eqref{eq:modified-TK-bound-in-disc}
with $g=g_4$ and $\phi=\phi_4$.
The first task is completed by the following result.

\begin{lemma}[Okayama et al.~{\cite[Lemma~5.4]{tomoaki21:_new}}]
Assume that $\tilde{f}$ is analytic in $\phi_4(\domD_d)$ with $0<d<\pi$,
and that there exist positive constants $K$, $\alpha$ and $\beta$
such that~\eqref{leq:f-Dd-plus}
holds for all $z\in\phi_4(\domD_d^{+})$,
and~\eqref{leq:f-Dd-minus-original}
holds for all $z\in\phi_4(\domD_d^{-})$.
Then, putting $F(\zeta)=\tilde{f}(\phi_4(\zeta))$,
we have $F\in\mathbf{H}^1(\domD_d)$.
\end{lemma}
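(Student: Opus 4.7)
The plan is to verify that $\mathcal{N}_1(F,d)<\infty$ for $F(\zeta)=\tilde f(\phi_4(\zeta))$ by estimating the boundary integral $\oint_{\partial\domD_d(\epsilon)}|F(\zeta)|\,|\mathrm{d}\zeta|$ uniformly in $\epsilon$ and letting $\epsilon\to 0$. The rectangular boundary of $\domD_d(\epsilon)$ decomposes into two vertical segments at $\Re\zeta=\pm 1/\epsilon$ and two horizontal segments at $\Im\zeta=\pm d(1-\epsilon)$; I would show that the vertical contributions vanish in the limit while the horizontal contributions stay uniformly bounded and converge to finite integrals along $\Im\zeta=\pm d$.

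The first task is to establish asymptotic estimates for $\phi_4(\zeta)=2\sinh(\log(\log(1+\E^\zeta)))$ on $\domD_d$. For $\Re\zeta\to+\infty$ uniformly in $|\Im\zeta|<d$, one checks successively that $\log(1+\E^\zeta)=\zeta+\OO(\E^{-\Re\zeta})$, $\log\log(1+\E^\zeta)=\log\zeta+\OO(\E^{-\Re\zeta}/|\zeta|)$, and hence $\phi_4(\zeta)=\zeta-1/\zeta+\OO(\E^{-\Re\zeta})$, so in particular $\Re\phi_4(\zeta)=\Re\zeta+\OO(1)$. For $\Re\zeta\to-\infty$ one obtains $\log(1+\E^\zeta)=\E^\zeta(1+\OO(\E^\zeta))$, $\log\log(1+\E^\zeta)=\zeta+\OO(\E^\zeta)$, and $\phi_4(\zeta)=-\E^{-\zeta}(1+\OO(\E^{2\zeta}))$, so $|\phi_4(\zeta)|$ is of order $\E^{-\Re\zeta}$. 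With these in hand, the hypotheses give on the right vertical segment $|F(\zeta)|\leq K|\E^{-\phi_4(\zeta)}|^\beta\leq C_1\E^{-\beta/\epsilon}$ from~\eqref{leq:f-Dd-plus}, and on the left vertical segment $|F(\zeta)|\leq K/|\phi_4(\zeta)|^\alpha\leq C_2\E^{-\alpha/\epsilon}$ from~\eqref{leq:f-Dd-minus-original}; after integrating against arc length $\OO(d)$, both vertical contributions tend to zero. On the horizontal segments the same asymptotics yield $|F(x\pm\I d(1-\epsilon))|\leq C_3\E^{-\beta x}$ for $x\geq 0$ and $|F(x\pm\I d(1-\epsilon))|\leq C_4\E^{\alpha x}$ for $x<0$, uniformly in $\epsilon$, so these integrals are dominated by an integrable majorant and converge by dominated convergence as $\epsilon\to 0$.

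The main technical obstacle is making the asymptotic expansions above rigorous and uniform on the entire closed strip $|\Im\zeta|\leq d$ for arbitrary $d<\pi$. Concretely, one must verify that both $1+\E^\zeta$ and $\log(1+\E^\zeta)$ avoid the branch cut of the outer logarithm throughout $\domD_d$, which is where the hypothesis $d<\pi$ becomes tight: at $\zeta=\pm\I\pi$ one has $1+\E^\zeta=0$ and the construction of $\phi_4$ breaks down, while for any $d<\pi$ compactness arguments provide uniform lower bounds on $|1+\E^\zeta|$ and $|\log(1+\E^\zeta)|$ away from zero in the central region, which can be combined with the asymptotics in the two tails. Once analyticity and uniform control of the remainder terms are established, the four boundary estimates combine to give $\mathcal{N}_1(F,d)<\infty$, i.e., $F\in\mathbf{H}^1(\domD_d)$.
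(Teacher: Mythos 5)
This lemma is not proved in the paper at all: it is imported verbatim from Okayama et al.\ \cite[Lemma~5.4]{tomoaki21:_new} and used as a black box, so there is no in-paper proof to compare against. Judged on its own, your direct verification of $\mathcal{N}_1(F,d)<\infty$ is the natural argument and its structure is sound: the identity $2\sinh(\log u)=u-1/u$ gives $\phi_4(\zeta)=\log(1+\E^{\zeta})-1/\log(1+\E^{\zeta})$, your tail asymptotics $\Re\phi_4(\zeta)=\Re\zeta+\OO(1)$ as $\Re\zeta\to+\infty$ and $|\phi_4(\zeta)|\asymp\E^{-\Re\zeta}$ as $\Re\zeta\to-\infty$ are correct and uniform in $|\Im\zeta|<d$ for $d<\pi$, and they do make the vertical contributions vanish and the horizontal ones dominated by $\E^{-\beta x}$ ($x\geq0$) and $\E^{\alpha x}$ ($x<0$). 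One point you should add to your ``central region'' compactness discussion: lower bounds on $|1+\E^{\zeta}|$ and $|\log(1+\E^{\zeta})|$ alone are not quite enough on $\domD_d^{-}$, because there the hypothesis only gives $|F(\zeta)|\leq K/|\phi_4(\zeta)|^{\alpha}$, so you also need $|\phi_4(\zeta)|$ itself bounded away from zero near $\Re\zeta=0$. This holds: $\phi_4(\zeta)=0$ forces $\log(1+\E^{\zeta})=\pm1$, i.e.\ $\E^{\zeta}=\E-1$ (which lies in $\Re\zeta>0$) or $\E^{\zeta}=\E^{-1}-1<0$ (which requires $|\Im\zeta|=\pi$), so $\phi_4$ has no zeros in $\domD_{\pi}^{-}$ and the compactness argument goes through; but as written your proposal does not notice that this extra nonvanishing check is needed. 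With that addition the proof is complete, and it is essentially a self-contained reconstruction of the external result the paper relies on.
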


Setting $\tilde{f}(z)=f(z)/g_4(z)$ in this lemma, we obtain
$F\in\mathbf{H}^1(\domD_d)$.
For the second task, we prepare the following three propositions and one lemma.

\begin{proposition}
\label{prop:bound-p-deriv}
Let $p(t) = (t + \sqrt{4+t^2})/2$.
For any positive integer $l\geq 2$, there exists a positive constant $C_l$
depending only on $l$ such that
\begin{equation}
 \left|p^{(l)}(t)\right|\leq \frac{C_l}{(4 + t^2)^{(l+1)/2}}
\label{eq:bound-p-deriv}
\end{equation}
holds for all $t\in\mathbb{R}$.
\end{proposition}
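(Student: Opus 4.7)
The plan is to reduce the claim to controlling derivatives of $q(t) := \sqrt{4+t^2}$. Since $p(t) = t/2 + q(t)/2$, for any $l \geq 2$ the linear term vanishes under differentiation and $p^{(l)}(t) = q^{(l)}(t)/2$, so it suffices to prove the analogous bound for $q^{(l)}$.

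The key step is a structural identity, which I would prove by induction on $l \geq 2$:
\[
 q^{(l)}(t) = \frac{P_l(t)}{(4+t^2)^{(2l-1)/2}},
\]
where $P_l$ is a polynomial of degree at most $l-2$. The base case $l=2$ is a direct computation: $q''(t) = 4/(4+t^2)^{3/2}$, so $P_2(t) = 4$ (degree $0$). For the induction step, differentiating the expression for $q^{(l)}$ yields
\[
 q^{(l+1)}(t) = \frac{P_l'(t)(4+t^2) - (2l-1) t\, P_l(t)}{(4+t^2)^{(2l+1)/2}},
\]
so $P_{l+1}(t) = P_l'(t)(4+t^2) - (2l-1) t\, P_l(t)$. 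Both summands have degree at most $(l-3)+2 = l-1$ and $(l-2)+1 = l-1$, respectively, so $\deg P_{l+1} \leq l-1 = (l+1)-2$, closing the induction.

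Once this identity is established, the degree estimate $\deg P_l \leq l-2$ gives, by bounding each monomial $|t|^k$ with $k \leq l-2$,
\[
 |P_l(t)| \leq C (1 + t^2)^{(l-2)/2} \leq C (4 + t^2)^{(l-2)/2}
\]
for a constant $C$ depending only on $l$. Substituting into the identity yields
\[
 |q^{(l)}(t)| \leq \frac{C (4+t^2)^{(l-2)/2}}{(4+t^2)^{(2l-1)/2}} = \frac{C}{(4+t^2)^{(l+1)/2}},
\]
and dividing by $2$ gives~\eqref{eq:bound-p-deriv}. The only point requiring care is the degree bookkeeping in the induction, but this is routine; no genuine obstacle is expected.
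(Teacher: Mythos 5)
Your proposal is correct and follows essentially the same route as the paper: an induction showing that the $l$-th derivative is a polynomial of degree at most $l-2$ over $(4+t^2)^{(2l-1)/2}$, followed by bounding the numerator via $|t|\leq\sqrt{4+t^2}$ (the paper works with $p^{(l)}$ directly rather than reducing to $q=\sqrt{4+t^2}$, and leaves the inductive recurrence implicit, but these are cosmetic differences). No issues.
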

\begin{proof}
For any positive integer $l\geq 2$, there exist constants
$a_0,\,a_1,\,\ldots,\,a_{l-2}$ such that
\[
 p^{(l)}(t)
 = \frac{a_0 + a_1 t + \cdots + a_{l-3} t^{l-3} + a_{l-2} t^{l-2}}{(4 + t^2)^{(2l - 1)/2}},
\]
which can be shown by induction.
Using $|t|=\sqrt{t^2} \leq \sqrt{4 + t^2}$, we have
\begin{align*}
\left|p^{(l)}(t)\right|
&\leq \frac{|a_0| + |a_1|(4 + t^2)^{1/2} + \cdots + |a_{l-3}|(4 + t^2)^{(l-3)/2} + |a_{l-2}|(4 + t^2)^{(l-2)/2}}{(4+t^2)^{(2l - 1)/2}}\\
&\leq \frac{|a_0|(4 + t^2)^{(l-2)/2} + |a_1|(4 + t^2)^{(l-2)/2} + \cdots + |a_{l-3}|(4 + t^2)^{(l-2)/2} + |a_{l-2}|(4 + t^2)^{(l-2)/2}}{(4+t^2)^{(2l - 1)/2}}\\
&=\frac{|a_0| + |a_1| + \cdots + |a_{l-3}| + |a_{l-2}|}{(4 + t^2)^{(l+1)/2}},
\end{align*}
which is the desired result.
\end{proof}

\begin{proposition}
\label{prop:sub-inequality}
It holds for all $t\in\mathbb{R}$ that
\[
 \frac{1}{1 - \E^{-(t + \sqrt{4+t^2})/2}}
\cdot \frac{1}{\sqrt{4+t^2}} \leq \frac{1}{2(1 - \E^{-1/2})}.
\]
\end{proposition}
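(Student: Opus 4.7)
My plan is to reduce the stated inequality to a single-variable inequality via the substitution $p = p(t)$ from Proposition~\ref{prop:bound-p-deriv}, then dispatch it in two easy cases.

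First, I would introduce $p = (t+\sqrt{4+t^2})/2$. The identity $(t+\sqrt{4+t^2})(\sqrt{4+t^2}-t) = 4$ yields $1/p = (\sqrt{4+t^2}-t)/2$, so $t = p - 1/p$ and $\sqrt{4+t^2} = p + 1/p$. Moreover, $p(\cdot)$ is a strictly increasing bijection from $\mathbb{R}$ onto $(0,\infty)$, since $p'(t) = p(t)/\sqrt{4+t^2} > 0$ with $p(t) \to 0^+$ as $t \to -\infty$ and $p(t) \to \infty$ as $t \to +\infty$. The claim thus becomes
\[
(1-\E^{-p})\left(p+\frac{1}{p}\right) \geq 2\bigl(1-\E^{-1/2}\bigr) \quad \text{for all } p > 0.
\]

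Next, I would split at $p = 1/2$. For $p \geq 1/2$, monotonicity of $x \mapsto 1-\E^{-x}$ gives $1-\E^{-p} \geq 1-\E^{-1/2}$, and AM-GM gives $p+1/p \geq 2$; multiplying these yields the desired bound. For $0 < p \leq 1/2$, I would use that $\phi(p) := (1-\E^{-p})/p$ is decreasing on $(0,\infty)$; indeed $\phi'(p) = (\E^{-p}(p+1)-1)/p^2 \leq 0$, which is equivalent to the standard $\E^p \geq 1+p$. Therefore $\phi(p) \geq \phi(1/2) = 2(1-\E^{-1/2})$, and rewriting $(1-\E^{-p})(p+1/p) = \phi(p)(p^2+1) \geq \phi(p)$ closes this sub-case.

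The only step requiring any insight is spotting the substitution together with the algebraic identity $\sqrt{4+t^2} = p+1/p$; once these are in hand, the remaining ingredients (monotonicity of $1-\E^{-x}$, AM-GM on $p+1/p$, and the convexity bound $\E^p \geq 1+p$) are entirely routine and finish the argument with ample slack, reflecting the fact that numerically the minimum of the left-hand side over $p>0$ is about $0.94$, comfortably above $2(1-\E^{-1/2})\approx 0.787$.
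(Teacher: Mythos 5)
Your proof is correct, and it takes a genuinely different route from the paper's. The paper never inverts the map $t\mapsto p(t)$; instead it first weakens the exponent via $(t+\sqrt{4+t^2})/2 \geq 1/\sqrt{4+t^2}$ (your $p \geq p/(p^2+1)$), which lands the whole expression in the form $q(u)=u/(1-\E^{-u})$ with $u=1/\sqrt{4+t^2}$ confined to $(0,1/2]$; a single monotonicity argument for $q$ then finishes in one stroke. You instead pass to $p\in(0,\infty)$ exactly, via the identity $\sqrt{4+t^2}=p+1/p$, and pay for the unbounded range with a case split at $p=1/2$ (product of two monotone bounds for $p\geq 1/2$; the decreasing function $\phi(p)=(1-\E^{-p})/p$ together with $p^2+1\geq 1$ for $p\leq 1/2$). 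The two arguments share their only nontrivial ingredient — the monotonicity of $u/(1-\E^{-u})$, equivalently $\E^x\geq 1+x$ (your $\phi$ is the reciprocal of the paper's $q$) — so neither is deeper than the other; the paper's version is slightly shorter because the preliminary weakening of the exponent collapses the two cases into one, while yours makes the exact change of variables explicit, which is arguably more transparent and also exposes how much slack the inequality has. All your intermediate claims (the identity $1/p=(\sqrt{4+t^2}-t)/2$, the bijectivity of $p(\cdot)$ onto $(0,\infty)$, the sign of $\phi'$, and the value $\phi(1/2)=2(1-\E^{-1/2})$) check out.
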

\begin{proof}
Using $\sqrt{t^2} \geq -t$, we have
\[
 \frac{1}{2}\left(t + \sqrt{4 + t^2}\right)
= \frac{(t + \sqrt{4 + t^2})(\sqrt{4 + t^2} - t)}{2(\sqrt{4 + t^2} - t)}
\geq \frac{2}{\sqrt{4 + t^2} + \sqrt{t^2}}
\geq \frac{2}{\sqrt{4 + t^2} + \sqrt{4 + t^2}} = \frac{1}{\sqrt{4+t^2}},
\]
from which it holds that
\[
 \frac{1}{1 - \E^{-(t + \sqrt{4 + t^2})/2}}
\cdot \frac{1}{\sqrt{4 + t^2}}
\leq \frac{1}{1 - \E^{-1/\sqrt{4 + t^2}}}
\cdot \frac{1}{\sqrt{4 + t^2}}.
\]
Putting $u = 1/\sqrt{4+t^2}$ and $q(u) = u / (1 - \E^{-u})$,
we investigate the maximum of $q(u)$ for $u \in[0, 1/2]$.
Calculating the derivative of $q$ gives
\[
 q'(u) = \frac{r(u)}{(1 - \E^{-u})^2},
\]
where $r(u) = 1 - \E^{-u}(1 + u)$.
We readily have $r(u)\geq 0$ because $\E^{u}\geq 1 + u$ holds
for $u\in\mathbb{R}$.
Therefore, we have $q'(u)\geq 0$,
which implies that $q(u)$ monotonically increases for $u\in\mathbb{R}$.
Thus, $q(u)\leq q(1/2)$ holds for $u\in [0, 1/2]$,
which gives the desired inequality.
\end{proof}

\begin{proposition}
\label{prop:bound-phi-4-inv}
For any positive integer $j$, there exists a positive constant $C_j$
depending only on $j$ such that
\[
 \left|\left\{\phi_4^{-1}(t)\right\}^{(j)}\right|
 \leq C_j
\]
holds for all $t\in \mathbb{R}$.
\end{proposition}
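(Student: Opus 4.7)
The plan is to apply Fa\`a di Bruno's formula (Theorem~\ref{thm:Faa-di-Bruno}) to an explicit representation of $\phi_4^{-1}$ and then bound the resulting sum using Propositions~\ref{prop:bound-p-deriv} and~\ref{prop:sub-inequality}. Solving $t=2\sinh(\log(\log(1+\E^x)))$ via the substitution $y=\log(1+\E^x)$ gives $t=y-1/y$, whose positive root is $y=p(t)$; hence
\[
\phi_4^{-1}(t)=\log(\E^{p(t)}-1)=p(t)+\log(1-\E^{-p(t)}).
\]
Consequently $\{\phi_4^{-1}\}^{(j)}(t)=p^{(j)}(t)+H^{(j)}(t)$ with $H(t)=\log(1-\E^{-p(t)})$. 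The summand $p^{(j)}(t)$ is uniformly bounded---directly for $j=1$ (since $0\leq p'(t)\leq 1$) and by Proposition~\ref{prop:bound-p-deriv} for $j\geq 2$---so it suffices to bound $H^{(j)}(t)$.

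Applying Theorem~\ref{thm:Faa-di-Bruno} to $H=G\circ p$ with $G(u)=\log(1-\E^{-u})$ expresses $H^{(j)}(t)$ as a finite sum of terms of the form
\[
c\,G^{(K)}(p(t))\prod_{l=1}^{j}\bigl(p^{(l)}(t)\bigr)^{k_l},\qquad K=k_1+\cdots+k_j,\quad \sum_{l=1}^{j}lk_l=j.
\]
A short induction shows $G^{(K)}(u)=P_K(\E^{-u})/(1-\E^{-u})^{K}$ for a polynomial $P_K$ bounded on $[0,1]$, so $|G^{(K)}(u)|\leq \tilde C_K/(1-\E^{-u})^K$ for $u>0$. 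Using $p'(t)=p(t)/\sqrt{4+t^2}$ for $l=1$ and Proposition~\ref{prop:bound-p-deriv} for $l\geq 2$, together with the identity $\sum_{l\geq 2}(l+1)k_l=j+K-2k_1$, one obtains
\[
\prod_{l=1}^{j}|p^{(l)}(t)|^{k_l}\leq C\,\frac{p(t)^{k_1}}{(4+t^2)^{(j+K-k_1)/2}}.
\]

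The crucial step is a pairing: split $(1-\E^{-p(t)})^{-K}$ as $(1-\E^{-p(t)})^{-k_1}(1-\E^{-p(t)})^{-(K-k_1)}$ and group the first factor with $p(t)^{k_1}$ to form $q(p(t))^{k_1}$, where $q(u)=u/(1-\E^{-u})$. The elementary fact that $u\mapsto u\E^{-u}/(1-\E^{-u})$ is non-increasing on $[0,\infty)$ with value $1$ at the origin yields $q(u)\leq u+1$, and combined with the trivial inequality $p(t)\leq\sqrt{4+t^2}$ this gives $q(p(t))\leq\tfrac{3}{2}\sqrt{4+t^2}$. The remaining $K-k_1$ copies of $1/(1-\E^{-p(t)})$ are then estimated by Proposition~\ref{prop:sub-inequality}. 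Collecting all powers of $4+t^2$ leaves a net exponent of $(k_1-j)/2\leq 0$, so each of the finitely many terms in the Fa\`a di Bruno expansion is uniformly bounded in $t$, proving the proposition.

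The main obstacle I expect is precisely this bookkeeping. Bounding $G^{(K)}(p(t))$ solely through Proposition~\ref{prop:sub-inequality} produces a term of order $(4+t^2)^{(2k_1-j)/2}$, which is unbounded whenever $k_1>j/2$ (worst case $k_1=j$, arising from $(p'(t))^{j}$). Extracting $q(p(t))^{k_1}$ and using that $q(p(t))/\sqrt{4+t^2}$ is uniformly bounded---a refinement not supplied directly by Proposition~\ref{prop:sub-inequality}---is what produces the cancellation necessary for a uniform estimate.
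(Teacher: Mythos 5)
Your proof is correct, but it takes a genuinely different route from the paper's. The paper likewise reduces everything to $p(t)=(t+\sqrt{4+t^2})/2$, but it applies Fa\`a di Bruno directly to the composition $\phi_4^{-1}=\phi_2^{-1}\circ p$, bounds the outer derivatives via Proposition~\ref{prop:bound-phi-inv-deriv} (yielding the same $(1-\E^{-p(t)})^{-K}$ growth you extract from $G$), and then resolves exactly the obstacle you identify---the factor $p'(t)^{k_1}$ with $k_1>j/2$---by splitting into the cases $t<0$ and $t\geq 0$. For $t<0$ the sharper bound $|p'(t)|<2/(4+t^2)$ holds, so Proposition~\ref{prop:bound-p-deriv} effectively extends to $l=1$, the product of inner derivatives supplies a full factor $(4+t^2)^{-j/2}$, and Proposition~\ref{prop:sub-inequality} applied $j$ times absorbs $(1-\E^{-p(t)})^{-j}$; for $t\geq 0$ no cancellation is needed at all, since $p(t)\geq p(0)=1$ makes $(1-\E^{-p(t)})^{-1}$ and every $|p^{(l)}(t)|$ trivially bounded. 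You instead split off the additive term $p(t)$, apply Fa\`a di Bruno only to $\log(1-\E^{-p(t)})$, and obtain a single estimate uniform in $t$ via the pairing $q(p(t))^{k_1}$ with $q(u)=u/(1-\E^{-u})\leq u+1$, a refinement that Proposition~\ref{prop:sub-inequality} does not supply but which is easy to verify; your exponent bookkeeping (net power $(k_1-j)/2\leq 0$) checks out. The trade-off is that the paper's case split keeps each individual estimate elementary at the cost of two parallel computations, whereas your pairing argument is more delicate but avoids the case distinction entirely; your diagnosis of the $k_1>j/2$ terms as the genuine difficulty is exactly the point the paper's $t<0$ analysis is designed to handle.
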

\begin{proof}
Put $p(t) = (t + \sqrt{4+t^2})/2$.
We use Fa\`{a} di Bruno's formula (Theorem~\ref{thm:Faa-di-Bruno})
with $f(t)=\phi_2^{-1}(t)$ and
$\tilde{f}(t)= p(t)$
(note that $\phi_4^{-1}(t) = \phi_2^{-1}(p(t))$).
Put $K_j =k_1 + k_2 + \cdots + k_j$,
where $k_1,\,k_2,\,\ldots,\,k_j$ are nonnegative integers
satisfying~\eqref{eq:sum-of-j-kj}.
Then, from Proposition~\ref{prop:bound-phi-inv-deriv}, it holds that
\[
 \left|f^{(K_j)}(\tilde{f}(t))\right|
\leq C_{K_j} (1 - \E^{-p(t)})^{-K_j}
\leq C_{K_j} (1 - \E^{-p(t)})^{-j},
\]
where
$K_j \leq 1\cdot k_1 + 2\cdot k_2 + \cdots + j\cdot k_j = j$
is used at the last inequality.
Next, we consider the bound of $|\tilde{f}^{(l)}(t)|$
for $t< 0$ and $t\geq 0$ separately.
Note that if $t<0$, then~\eqref{eq:bound-p-deriv} holds for $l=1$ as well,
because
\[
 |\tilde{f}'(t)|
=|p'(t)|= \frac{\sqrt{4+t^2}+ t}{2\sqrt{4+t^2}}
\cdot\frac{\sqrt{4+t^2} - t}{\sqrt{4 + t^2} - t}
=\frac{2}{4 + t^2 - t \sqrt{4 + t^2}}
<\frac{2}{4 + t^2 - 0}.
\]
Therefore, for $t<0$, using~\eqref{eq:sum-of-j-kj}, we have
\begin{align*}
\left|f^{(K_j)}(t)\prod_{l=1}^{j}\left\{
\frac{\tilde{f}^{(l)}(t)}{l!}
\right\}^{k_l}\right|
&\leq C_{K_j} (1 - \E^{-p(t)})^{-j}
\prod_{l=1}^{j}\left\{
\frac{C_l}{l!(4 + t^2)^{(l+1)/2}}
\right\}^{k_l}\\
&\leq C_{K_j} (1 - \E^{-p(t)})^{-j}
\prod_{l=1}^{j}\left\{
\frac{C_l}{l!(4 + t^2)^{l/2}}
\right\}^{k_l}\\
&= \frac{C_{K_j}}{(1 - \E^{-p(t)})^{j}}
\left\{
\left(\frac{C_1}{1!}\right)^{k_1}
\left(\frac{C_2}{2!}\right)^{k_2}
\cdots
\left(\frac{C_j}{j!}\right)^{k_j}
\cdot\frac{1}{(4+t^2)^{(k_1 + 2k_2 + \cdots + j k_j)/2}}
\right\}\\
&= \frac{C_{K_j}}{(1 - \E^{-p(t)})^{j}}
\cdot \frac{1}{(4 + t^2)^{j/2}}
 \prod_{l=1}^{j}\left(\frac{C_l}{l!}\right)^{k_l}\\
&\leq \frac{C_{K_j}}{\left\{2 (1 - \E^{-1/2})\right\}^j}
\prod_{l=1}^{j}\left(\frac{C_l}{l!}\right)^{k_l},
\end{align*}
where Proposition~\ref{prop:sub-inequality} is used at
the last inequality.
Thus, the claim of this proposition follows for $t<0$.
Let $t\geq 0$ below.
For $l\geq 2$, from~\eqref{eq:bound-p-deriv}, we have
\[
 |\tilde{f}^{(l)}(t)|
\leq \frac{C_l}{(4 + t^2)^{(l+1)/2}}
\leq \frac{C_l}{(4 + 0)^{(l+1)/2}},
\]
and for $l=1$, we have
\[
 |\tilde{f}^{(l)}(t)|
=\frac{1}{2}\left(1 + \frac{t}{\sqrt{4 + t^2}}\right)
\leq\frac{1}{2}\left(1 + 1\right).
\]
Therefore, for any positive integer $l$,
$|\tilde{f}^{(l)}(t)|$ is bounded.
In addition, from Proposition~\ref{prop:bound-phi-inv-deriv}, it holds that
\[
 \left|f^{(K_j)}(\tilde{f}(t))\right|
\leq C_{K_j} (1 - \E^{-p(t)})^{-K_j}
\leq C_{K_j} (1 - \E^{-p(0)})^{-K_j}.
\]
Thus, the claim of this proposition follows for $t\geq 0$.
\end{proof}

\begin{lemma}
\label{lem:bound-exp-phi-inv-4}
Let a positive real number $H$ be given, and let $h\in(0,H]$.
Then, there exists a positive constant $C_{j,H}$
depending only on $j$ and $H$ such that
\[
\sup_{-\pi/h\leq s\leq \pi/h}
\left|
\left\{
\E^{\I s \phi_4^{-1}(t)}
\right\}^{(j)}
\right|
\leq C_{j,H} h^{-j}
\quad (j=0,\,1,\,2,\,\ldots,\,m)
\]
holds for all $t\in\mathbb{R}$.
\end{lemma}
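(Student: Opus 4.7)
The plan is to follow the same strategy as in the proof of Lemma~\ref{lem:bound-exp-phi-inv}, but exploiting the fact that Proposition~\ref{prop:bound-phi-4-inv} already gives \emph{uniform} (in $t$) bounds on all derivatives of $\phi_4^{-1}$; no $(1-\E^{-t})^{-j}$ weight appears. Concretely, apply Fa\`a di Bruno's formula (Theorem~\ref{thm:Faa-di-Bruno}) with outer function $f(t)=\E^{\I s t}$ and inner function $\tilde f(t)=\phi_4^{-1}(t)$, so that
\[
 \left(\frac{\mathrm{d}}{\mathrm{d}t}\right)^{j}\E^{\I s\phi_4^{-1}(t)}
 =\sum \frac{j!}{k_1!\cdots k_j!}\,(\I s)^{K_j}\E^{\I s\phi_4^{-1}(t)}
 \prod_{l=1}^{j}\left(\frac{\{\phi_4^{-1}(t)\}^{(l)}}{l!}\right)^{k_l},
\]
where $K_j=k_1+\cdots+k_j$ and the sum is over nonnegative $k_1,\ldots,k_j$ satisfying~\eqref{eq:sum-of-j-kj}.

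Next, bound the outer factor exactly as in~\eqref{eq:f-K_j-bound}: since $|s|\leq \pi/h$ and $K_j\leq j$, using $h\leq H$,
\[
 \bigl|(\I s)^{K_j}\E^{\I s\phi_4^{-1}(t)}\bigr|
 \leq \frac{\pi^{K_j}}{h^{K_j}}
 =\frac{\pi^{K_j}h^{j-K_j}}{h^{j}}
 \leq \frac{\pi^{K_j}H^{j-K_j}}{h^{j}}.
\]
For the inner factor, Proposition~\ref{prop:bound-phi-4-inv} supplies, for each $l=1,\ldots,j$, a constant $C_l$ with $|\{\phi_4^{-1}(t)\}^{(l)}|\leq C_l$ for all $t\in\mathbb{R}$, hence
\[
 \left|\prod_{l=1}^{j}\left(\frac{\{\phi_4^{-1}(t)\}^{(l)}}{l!}\right)^{k_l}\right|
 \leq \prod_{l=1}^{j}\left(\frac{C_l}{l!}\right)^{k_l}.
\]

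Combining these two estimates term by term, and summing over the finitely many multi-indices $(k_1,\ldots,k_j)$ satisfying~\eqref{eq:sum-of-j-kj}, yields an estimate of the form (bound) $\leq C_{j,H} h^{-j}$, uniformly in $t\in\mathbb{R}$ and $s\in[-\pi/h,\pi/h]$. The case $j=0$ is trivial, since $|\E^{\I s\phi_4^{-1}(t)}|=1$. There is no real obstacle here: the non-trivial work has already been done in Proposition~\ref{prop:bound-phi-4-inv} (which handled the $t<0$ and $t\geq 0$ regimes separately using $p(t)=(t+\sqrt{4+t^2})/2$ and Proposition~\ref{prop:sub-inequality}), so the present lemma reduces to the same Fa\`a di Bruno computation as in Lemma~\ref{lem:bound-exp-phi-inv}, but considerably simpler because the $(1-\E^{-t})^{-j}$ factors are absent.
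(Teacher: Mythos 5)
Your proposal is correct and matches the paper's own proof essentially verbatim: both apply Fa\`a di Bruno's formula with $f(t)=\E^{\I s t}$ and $\tilde f(t)=\phi_4^{-1}(t)$, reuse the bound~\eqref{eq:f-K_j-bound} for the outer factor, and invoke Proposition~\ref{prop:bound-phi-4-inv} for the uniform bound on the product of inner derivatives. No differences worth noting.
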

\begin{proof}
We use Fa\`{a} di Bruno's formula (Theorem~\ref{thm:Faa-di-Bruno})
with $f(t)=\E^{\I s t}$ and
$\tilde{f}(t)= \phi_4^{-1}(t)$.
Put $K_j =k_1 + k_2 + \cdots + k_j$,
where $k_1,\,k_2,\,\ldots,\,k_j$ are nonnegative integers
satisfying~\eqref{eq:sum-of-j-kj}.
Then,~\eqref{eq:f-K_j-bound} holds
because $|s|\leq \pi/h$
and
$K_j \leq 1\cdot k_1 + 2\cdot k_2 + \cdots + j\cdot k_j = j$.
Furthermore, from Proposition~\ref{prop:bound-phi-4-inv},
we have
\[
 \prod_{l=1}^j
\left|\frac{\tilde{f}^{(l)}(t)}{l!}\right|^{k_l}
= \prod_{l=1}^j
\frac{1}{(l!)^{k_l}}\left|\left\{\phi_4^{-1}(t)\right\}^{(l)}\right|^{k_l}
\leq
\prod_{l=1}^j
\frac{1}{(l!)^{k_l}}C_l^{k_l}.
\]
Thus, the claim follows.
\end{proof}

Using the results above,
we show~\eqref{eq:modified-TK-bound-in-disc} as follows.

\begin{lemma}
\label{lem:modified-TK-bound-in-disc-4}
Let a positive real number $H$ be given, and let $h\in(0,H]$.
Let $j$ be a nonnegative integer,
and $\{\lambda_l\}_{l=0}^{j}$ be nonnegative integers
satisfying $\lambda_0=0$ and $\sum_{l=1}^{j} l \lambda_l = j$.
Then, there exists a positive constant $\tilde{C}_1$
such that~\eqref{eq:modified-TK-bound-in-disc} holds with
$(a,b)=(-\infty,\infty)$, $g=g_4$ and $\phi = \phi_4$.
\end{lemma}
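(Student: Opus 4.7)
The plan is to follow the proof of Lemma~\ref{lem:modified-TK-bound-in-disc} almost verbatim, noting that the present case is substantially simpler because $g_4(t)\equiv 1$. Under this simplification the Leibniz-rule expansion collapses, the compensating factor $(1-\E^{-t})^{j}$ from the $i=2$ argument disappears, and the bound reduces to combining (i) a uniform estimate on the $(m-j)$-th $t$-derivative of $\E^{\I s\phi_4^{-1}(t)}$ for $|s|\leq \pi/h$, and (ii) a uniform estimate on the product $\prod_{l=0}^{j}[\{\phi_4^{-1}(t)\}^{(l)}]^{\lambda_l}$ over $t\in\mathbb{R}$.

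For ingredient (i) I would invoke Lemma~\ref{lem:bound-exp-phi-inv-4} with $j$ replaced by $m-j$, yielding a constant $C_{m-j,H}$ with
\[
\sup_{|s|\leq \pi/h}\left|\left(\E^{\I s \phi_4^{-1}(t)}\right)^{(m-j)}\right|\leq C_{m-j,H}\, h^{-(m-j)}\qquad(t\in\mathbb{R}).
\]
For ingredient (ii), since $\lambda_0=0$ the product effectively runs over $l=1,\ldots,j$, and Proposition~\ref{prop:bound-phi-4-inv} supplies constants $C_l$ with $|\{\phi_4^{-1}(t)\}^{(l)}|\leq C_l$ on all of $\mathbb{R}$; hence
\[
\left|\prod_{l=0}^{j}\left[\{\phi_4^{-1}(t)\}^{(l)}\right]^{\lambda_l}\right|\leq \prod_{l=1}^{j} C_l^{\lambda_l},
\]
a constant depending only on $j$ and on the tuple $(\lambda_l)$. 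Multiplying (i) and (ii) and using $h\leq H$ to absorb the excess factor as $h^{-(m-j)}\leq H^{j}\, h^{-m}$ produces a bound of the form $\tilde{C}_{1}(j,(\lambda_l))\, h^{-m}$; taking the maximum of these finitely many constants over the admissible pairs $(j,(\lambda_l))$ with $j\in\{0,1,\ldots,m\}$ delivers the required uniform $\tilde{C}_1$.

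There is essentially no obstacle to overcome here; all of the genuine analytic work has already been done, most notably in Proposition~\ref{prop:bound-phi-4-inv}, whose proof required a delicate case split in $t<0$ versus $t\geq 0$ together with Propositions~\ref{prop:bound-p-deriv} and~\ref{prop:sub-inequality}. Once those bounds are in hand, the present lemma is a short piece of chain-rule and product-rule bookkeeping, strictly parallel to Lemma~\ref{lem:modified-TK-bound-in-disc} with the trivial simplification $g_2\rightsquigarrow g_4\equiv 1$.
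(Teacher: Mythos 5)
Your proposal is correct and follows essentially the same route as the paper: the paper's proof likewise observes that $g_4\equiv 1$ collapses the Leibniz expansion, applies Lemma~\ref{lem:bound-exp-phi-inv-4} to bound $\left|\left(\E^{\I s\phi_4^{-1}(t)}\right)^{(m-j)}\right|$ by $C_{m-j,H}h^{-(m-j)}\leq C_{m-j,H}H^{j}h^{-m}$, and bounds the product $\prod_{l=0}^{j}\left|\left\{\phi_4^{-1}(t)\right\}^{(l)}\right|^{\lambda_l}$ by $\prod_{l=0}^{j}C_l^{\lambda_l}$ via Proposition~\ref{prop:bound-phi-4-inv}. No discrepancies worth noting.
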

\begin{proof}
Using Lemma~\ref{lem:bound-exp-phi-inv-4},
we have
\begin{align*}
\left|
\left(\frac{\uppartial}{\uppartial t}\right)^{m-j}
\left[g_4(t)\E^{\I s\phi_4^{-1}(t)}\right]
\right|
&=\left|
\left(\E^{\I s \phi_4^{-1}(t)}\right)^{(m-j)}
\right|\leq C_{m-j,H} h^{-(m-j)}
=C_{m-j,H} h^j h^{-m} \leq C_{m-j,H} H^j h^{-m}.
\end{align*}
Furthermore, from Proposition~\ref{prop:bound-phi-4-inv},
we have
\[
 \left|
\prod_{l=0}^j \left[\left\{\phi_4^{-1}(t)\right\}^{(l)}\right]^{\lambda_l}
\right|
\leq
\prod_{l=0}^j \left|\left\{\phi_4^{-1}(t)\right\}^{(l)}\right|^{\lambda_l}
\leq \prod_{l=0}^j C_l^{\lambda_l}.
\]
Combining these estimates,
we obtain the claim.
\end{proof}

Thus, we can use Theorem~\ref{thm:discretization-error}
for the discretization error.
Next, to use Lemma~\ref{lem:truncation-error}
for the truncation error,
we should show~\eqref{leq:F-exponential-decay-real}
(the inequality~\eqref{eq:Stenger-truncate-bound} clearly holds
from~\eqref{eq:modified-TK-bound-in-disc},
which is already shown by Lemma~\ref{lem:modified-TK-bound-in-disc-4}).
For the purpose, the following lemmas are useful.

\begin{lemma}[Okayama et al.~{\cite[Lemma~4.7]{tomoaki21:_new}}]
\label{lem:bound-Ddplus}
It holds for all $\zeta\in\overline{\domD_{\pi}^{+}}$ that
\[
 \left|\E^{1/\log(1 + \E^{\zeta})}\right|\leq \E^{1/\log 2}.
\]
\end{lemma}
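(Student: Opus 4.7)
The plan is to pass to real parts: since $|\E^w|=\E^{\Re w}$, the bound is equivalent to $\Re[1/\log(1+\E^\zeta)]\leq 1/\log 2$ on $\overline{\domD_\pi^+}$. The function $h(\zeta):=1/\log(1+\E^\zeta)$ is analytic on $\domD_\pi^+$: the only zeros of $1+\E^\zeta$ in $\overline{\domD_\pi}$ are $\zeta=\pm\I\pi$ (boundary points), and $\log(1+\E^\zeta)$ vanishes nowhere in the interior. Since $1/\log z\to 0$ as $z\to 0$, $h$ extends continuously by $0$ at $\pm\I\pi$, making $\Re h$ harmonic on $\domD_\pi^+$ and continuous on $\overline{\domD_\pi^+}$. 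I then apply the maximum principle to the bounded rectangles $\{0\leq\Re\zeta\leq R,\,|\Im\zeta|\leq\pi-\epsilon\}$ and pass to $R\to\infty$, $\epsilon\to 0$, reducing the claim to a boundary check.

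Three of the four boundary pieces are handled easily. As $\Re\zeta\to\infty$, $\log(1+\E^\zeta)\sim\zeta$ and $\Re h\to 0$. On the horizontal edges $\zeta=x\pm\I\pi$ with $x\geq 0$, taking limits from within the strip gives $\log(1+\E^\zeta)=\log(\E^x-1)\pm\I\pi$, and setting $\tau=\log(\E^x-1)$ one obtains $\Re h=\tau/(\tau^2+\pi^2)$; elementary calculus gives the maximum $1/(2\pi)$ at $\tau=\pi$, which is strictly less than $1/\log 2$.

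The substantive case is the vertical edge $\zeta=\I y$, $|y|\leq\pi$. The half-angle identity $1+\E^{\I y}=2\cos(y/2)\E^{\I y/2}$ gives $\log(1+\E^{\I y})=\log(2\cos(y/2))+\I(y/2)$, so the target inequality reads
\[
\frac{\log(2\cos(y/2))}{[\log(2\cos(y/2))]^2+(y/2)^2}\leq\frac{1}{\log 2}.
\]
The case $\log(2\cos(y/2))\leq 0$ (i.e., $|y|\geq 2\pi/3$) is immediate; for the remaining range, clearing denominators and completing the square reduces the inequality to
\[
\Phi(y):=\bigl\{\log[\sqrt{2}\cos(y/2)]\bigr\}^2+(y/2)^2-(\log\sqrt{2})^2\geq 0,
\]
with equality precisely at $y=0$.

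The main obstacle is establishing $\Phi\geq 0$ uniformly on $[-\pi,\pi]$, since the sharp equality at $y=0$ blocks any naive estimate. The plan is a three-step calculus argument: (i) direct differentiation gives $\Phi(0)=\Phi'(0)=0$ together with $\Phi''(0)=(2-\log 2)/4>0$, so $y=0$ is a strict local minimum; (ii) $\Phi(y)\to+\infty$ as $|y|\to\pi^-$ through the logarithmic divergence of the first term; (iii) any interior minimum on $(0,\pi)$ is ruled out by inspecting
\[
\Phi'(y)=-\tfrac{1}{2}\tan(y/2)\log[2\cos^2(y/2)]+y/2
\]
and verifying $y\geq \tan(y/2)\log[2\cos^2(y/2)]$ on $[0,\pi)$ --- trivial on $[\pi/2,\pi)$ since the right-hand side is non-positive there, and provable on $[0,\pi/2]$ via Taylor comparison combined with monotonicity of the difference.
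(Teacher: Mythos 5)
The paper does not prove this lemma: it is quoted directly from the cited reference (Okayama et al., Lemma~4.7 there), so there is no in-paper argument to compare yours against, and your proof must stand on its own. It does. The reduction to $\Re\bigl[1/\log(1+\E^{\zeta})\bigr]\leq 1/\log 2$, the analyticity of $h(\zeta)=1/\log(1+\E^{\zeta})$ on the region (the only zeros of $1+\E^{\zeta}$ in the closed half-strip are the corners $\pm\I\pi$, where $h$ extends continuously by $0$), and the exhaustion by bounded rectangles before applying the maximum principle are all sound. The boundary values are computed correctly: on the horizontal edges $\Re h=\tau/(\tau^{2}+\pi^{2})\leq 1/(2\pi)<1/\log 2$, and on the imaginary axis the half-angle identity yields exactly the sharp case; with $u=\log(2\cos(y/2))$ and $v=y/2$ the claim $u/(u^{2}+v^{2})\leq 1/\log 2$ is indeed equivalent (after discarding $u\leq 0$) to your $\Phi(y)=\{\log[\sqrt{2}\cos(y/2)]\}^{2}+(y/2)^{2}-(\log\sqrt{2})^{2}\geq 0$. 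Two remarks. First, your step (iii) alone finishes the job: once $y\geq\tan(y/2)\log[2\cos^{2}(y/2)]$ is verified on $[0,\pi)$ you have $\Phi'\geq 0$ there, and with $\Phi(0)=0$ this gives $\Phi\geq 0$ outright, so steps (i) and (ii) are redundant. Second, the one estimate you left implicit deserves a line: for $\theta=y/2\in[0,\pi/4]$, since $\log\cos\theta\leq 0$ and $\tan\theta\leq(4/\pi)\theta$ on that range, one has $\tan\theta\,(\log 2+2\log\cos\theta)\leq(4\log 2/\pi)\theta<2\theta$, so no Taylor expansion is actually needed. The route is heavier machinery than one might expect for a pointwise inequality of this kind (harmonic majorization on an unbounded region rather than a direct estimate of $\Re(1/w)$ over the image of the half-strip), but it is complete and correct.
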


\begin{lemma}[Okayama et al.~{\cite[Lemma~4.9]{tomoaki21:_new}}]
\label{lem:bound-Ddminus}
It holds for all $\zeta\in\overline{\domD_{\pi}^{-}}$ that
\[
 \left|\frac{1}{-1 + \log(1 + \E^{\zeta})}\right|\leq
\frac{1}{1 - \log 2}.
\]
\end{lemma}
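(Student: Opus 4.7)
The plan is to apply the maximum modulus principle to the function $F(\zeta) = 1/(\log(1 + \E^{\zeta}) - 1)$ on the closed half-strip $\overline{\domD_{\pi}^{-}}$, showing that the bound $1/(1 - \log 2)$ holds on the boundary and hence throughout the interior.

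First, I would verify that $F$ is holomorphic in the interior of $\overline{\domD_{\pi}^{-}}$. The denominator vanishes only when $\E^{\zeta} = \E - 1$, which forces $\Re \zeta = \log(\E - 1) > 0$ and therefore excludes the left half-strip. The inner logarithm is singular only where $1 + \E^{\zeta} = 0$, i.e., at $\zeta = \pm \I \pi$, both of which lie on the boundary; near those corners $|\log(1 + \E^{\zeta})| \to \infty$, so $F$ extends continuously by zero and contributes no obstruction to analyticity.

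Next, I would estimate $|F|$ piece by piece on the boundary of $\overline{\domD_{\pi}^{-}}$. On the imaginary segment $\zeta = \I y$ with $|y| \leq \pi$, the identity $1 + \E^{\I y} = 2\cos(y/2)\E^{\I y/2}$ yields
\[
\log(1 + \E^{\I y}) - 1 = -\bigl(1 - \log(2\cos(y/2))\bigr) + \I \frac{y}{2},
\]
so its squared modulus equals $(1 - \log(2\cos(y/2)))^2 + (y/2)^2 \geq (1 - \log 2)^2$, with equality exactly at $y = 0$. On the horizontal edges $\zeta = x \pm \I \pi$ with $x \leq 0$, one has $1 + \E^{\zeta} = 1 - \E^{x} \in [0, 1)$, so $\log(1 + \E^{\zeta}) - 1 \leq -1$ and therefore $|F| \leq 1$. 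Finally, as $\Re \zeta \to -\infty$, $F(\zeta) \to -1$, so $|F| \to 1$.

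Since every boundary value (and the limit as $\Re \zeta \to -\infty$) is bounded by $1/(1 - \log 2)$, applying the maximum modulus principle on truncated rectangles $\{-R \leq \Re \zeta \leq 0,\ |\Im \zeta| \leq \pi\}$ and letting $R \to \infty$ yields the claim. The main obstacle I anticipate is a careful treatment of the corners $\zeta = \pm \I \pi$: one should either excise small disks around them (using $F \to 0$ there to control the boundary contribution) or first observe that the singularity of $F$ is removable before invoking the maximum principle on the truncated rectangles.
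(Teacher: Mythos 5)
The paper does not prove this lemma itself; it is imported verbatim from Okayama et al.~\cite[Lemma~4.9]{tomoaki21:_new}, so there is no in-paper argument to compare against. Your proof is correct as far as I can check: the denominator $\log(1+\E^{\zeta})-1$ has its only zero in the strip at $\zeta=\log(\E-1)$, which lies in the right half-strip; the boundary computations on $\Re\zeta=0$ (via $1+\E^{\I y}=2\cos(y/2)\E^{\I y/2}$), on $\Im\zeta=\pm\pi$, and in the limit $\Re\zeta\to-\infty$ are all accurate; and the truncated-rectangle maximum-modulus argument closes the gap, with the corners $\pm\I\pi$ handled by the continuous extension $F\to 0$. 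That said, the maximum modulus principle is much heavier machinery than the statement needs. Since $|1+\E^{\zeta}|\le 1+\E^{\Re\zeta}\le 2$ on $\overline{\domD_{\pi}^{-}}$, one has $\Re\left(-1+\log(1+\E^{\zeta})\right)=-1+\log|1+\E^{\zeta}|\le -1+\log 2<0$, and therefore
\[
\left|-1+\log(1+\E^{\zeta})\right|\ \ge\ \left|\Re\left(-1+\log(1+\E^{\zeta})\right)\right|\ =\ 1-\log|1+\E^{\zeta}|\ \ge\ 1-\log 2,
\]
which gives the claim in two lines, pointwise, with no appeal to analyticity, corner excision, or limiting rectangles; this elementary estimate is presumably the route taken in the cited reference. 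Your approach does buy one thing the short proof does not: it identifies $\zeta=0$ as the point where the bound is attained, so the constant $1/(1-\log 2)$ is seen to be sharp.
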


Using these lemmas, we show~\eqref{leq:F-exponential-decay-real}
as follows.

\begin{lemma}
Assume that there exist positive constants $K$, $\alpha$ and $\beta$
such that~\eqref{leq:f-Dd-plus}
holds for all $z\in\phi_4(\domD_d^{+})$,
and~\eqref{leq:f-Dd-minus-original}
holds for all $z\in\phi_4(\domD_d^{-})$.
Let $F(\zeta)=f(\phi_4(\zeta))$.
Then, there exists a constant $R$
such that~\eqref{leq:F-exponential-decay-real} holds
for all $x\in\mathbb{R}$.
\end{lemma}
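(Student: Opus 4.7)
The plan is to split the real line at $x=0$ and invoke the two hypotheses on $f$ separately, exploiting the identity
\[
 \phi_4(x) = 2\sinh(\log(\log(1+\E^x))) = \log(1+\E^x) - \frac{1}{\log(1+\E^x)},
\]
which follows from $2\sinh(\log u) = u - 1/u$. Rearranging gives
\[
 \E^{-\phi_4(x)} = \frac{\E^{1/\log(1+\E^x)}}{1+\E^x},
\]
the natural object for the right half, while the left half is handled via the difference-of-squares factorization
\[
 \phi_4(x) = -\frac{(1-\log(1+\E^x))(1+\log(1+\E^x))}{\log(1+\E^x)}.
\]

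For $x\ge 0$, I apply~\eqref{leq:f-Dd-plus} to $z=\phi_4(x)\in\phi_4(\domD_d^{+})$ and bound $|\E^{1/\log(1+\E^x)}|\le\E^{1/\log 2}$ using Lemma~\ref{lem:bound-Ddplus}. This yields $|F(x)|\le K\E^{\beta/\log 2}/(1+\E^x)^{\beta}$, which matches the target bound up to the harmless factor $(1+\E^{-x})^{\alpha}\le 2^{\alpha}$; so a choice $R\ge 2^{\alpha} K\E^{\beta/\log 2}$ suffices on this half.

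For $x<0$, I apply~\eqref{leq:f-Dd-minus-original} to $z=\phi_4(x)\in\phi_4(\domD_d^{-})$ and must produce a lower bound on $|\phi_4(x)|$. Using the factorization above, on $x<0$ all three factors are positive; I bound them from below by $1-\log 2$ (via Lemma~\ref{lem:bound-Ddminus} applied on the negative real line, where $|{-1}+\log(1+\E^x)|=1-\log(1+\E^x)$), by $1$, and, using $\log(1+\E^x)\le\E^x$, by $\E^x$, respectively. Hence $|\phi_4(x)|\ge(1-\log 2)\E^{-x}$ and
\[
 |F(x)|\le \frac{K\E^{\alpha x}}{(1-\log 2)^{\alpha}}.
\]
On $x<0$ one has $(1+\E^{-x})^{\alpha}(1+\E^x)^{\beta}\le 2^{\alpha+\beta}\E^{-\alpha x}$, so the target bound is achieved with $R\ge 2^{\alpha+\beta}K/(1-\log 2)^{\alpha}$.

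Taking $R$ to be the maximum of the two constants produced above gives~\eqref{leq:F-exponential-decay-real} on all of $\mathbb{R}$. The main obstacle I expect is the $x<0$ lower bound on $|\phi_4(x)|$: the remaining manipulations are mechanical once the identity for $\phi_4$ is in hand, but pinning $|\phi_4(x)|$ below by a quantity proportional to $\E^{-x}$ (so that it cancels the exponential growth allowed on the right-hand side) requires the precise factorization together with Lemma~\ref{lem:bound-Ddminus}.
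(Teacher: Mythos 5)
Your proposal is correct and follows essentially the same route as the paper: split at $x=0$, use the identity $\phi_4(x)=\log(1+\E^x)-1/\log(1+\E^x)$ together with Lemma~\ref{lem:bound-Ddplus} on the right half and the factorization of $\phi_4$ together with Lemma~\ref{lem:bound-Ddminus} on the left half, then absorb the missing factor at the cost of powers of $2$. The only (harmless) deviation is on $x<0$, where you extract the factor $(1+\E^{-x})^{-\alpha}$ via the elementary bound $\log(1+\E^x)\le\E^x$ and an intermediate $\E^{\alpha x}$ estimate, whereas the paper gets it in one step from Lemma~\ref{lem:bound-OkaShinKatsu}, yielding a slightly smaller constant ($2^{\beta}$ in place of your $2^{\alpha+\beta}$).
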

\begin{proof}
From~\eqref{leq:f-Dd-plus} and Lemma~\ref{lem:bound-Ddplus},
it holds for $x\geq 0$ that
\[
 |f(\phi_4(x))|
\leq K |\E^{-\phi_4(x)}|^{\beta}
= K |\E^{-\log(1+\E^{x})}|^{\beta} |\E^{1/\log(1+\E^{x})}|^{\beta}
\leq \frac{K \E^{\beta/\log 2}}{(1 + \E^x)^{\beta}}.
\]
Furthermore, because $1 + \E^{-x}\leq 1 + \E^{-0} = 2$
holds for $x\geq 0$, we have
\[
 \frac{K \E^{\beta/\log 2}}{(1 + \E^x)^{\beta}}
\leq  \frac{K \E^{\beta/\log 2}}{(1 + \E^x)^{\beta}}
\cdot \left(\frac{2}{1 + \E^{-x}}\right)^{\alpha}
= \frac{K \E^{\beta/\log 2} 2^{\alpha}}{(1+\E^{-x})^{\alpha}(1+\E^x)^{\beta}}.
\]
On the other hand,
from~\eqref{leq:f-Dd-minus-original} and
Lemmas~\ref{lem:bound-OkaShinKatsu} and~\ref{lem:bound-Ddminus},
it holds for $x< 0$ that
\[
 |f(\phi_4(x))|\leq K \frac{1}{|\phi_4(x)|^{\alpha}}
=K \left|\frac{\log(1+\E^x)}{1+\log(1+\E^x)}\right|^{\alpha}
\cdot\left|\frac{1}{-1 + \log(1+\E^x)}\right|^{\alpha}
\leq K \frac{1}{(1+\E^{-x})^{\alpha}}\cdot\frac{1}{(1 - \log 2)^{\alpha}}.
\]
Furthermore, because $1 + \E^{x}\leq 1 + \E^{0} = 2$
holds for $x < 0$, we have
\[
 K \frac{1}{(1+\E^{-x})^{\alpha}}\cdot\frac{1}{(1 - \log 2)^{\alpha}}
\leq  K \frac{1}{(1+\E^{-x})^{\alpha}}\cdot\frac{1}{(1 - \log 2)^{\alpha}}
\cdot \left(\frac{2}{1 + \E^x}\right)^{\beta}
= \frac{K 2^{\beta}}{(1 - \log 2)^{\alpha}}
\cdot \frac{1}{(1 + \E^{-x})^{\alpha}(1 + \E^x)^{\beta}}.
\]
Hence,~\eqref{leq:F-exponential-decay-real} holds with
\[
 R = \max\left\{
K \E^{\beta/\log 2} 2^{\alpha},
\frac{K 2^{\beta}}{(1 - \log 2)^{\alpha}}
\right\}.
\]
\end{proof}

Therefore, we can use Lemma~\ref{lem:truncation-error}
for the truncation error.
Thus, Theorem~\ref{thm:our-4} is established
by combining Theorem~\ref{thm:discretization-error}
and Lemma~\ref{lem:truncation-error}
as outlined in the sketch of the proof.

\section{Numerical examples}
\label{sec:numer}

This section presents numerical results.
All programs were written in the C programming language with double-precision floating-point
arithmetic.
In all examples, we set $m=2$ and approximated $f^{(l)}(t)$
for $l=0,\,1,\,2$.

First, we consider the following function
\begin{equation}
 f(t) = \sqrt{\frac{t}{1+t}}\E^{-t} (1 - \E^{-t})^2,\quad t\in(0,\infty),
\label{eq:example1}
\end{equation}
which is the case of $i=2$.
Therefore, we set $g_2(t) = (1 - \E^{-t})^2$.
The function $f$ satisfies the assumptions
of Theorem~\ref{thm:Stenger-2} with
$d=1.57$, $\alpha=1/2$ and $\beta=1$,
and also satisfies the assumptions
of Theorem~\ref{thm:our-2} with
$d=3.14$, $\alpha=1/2$ and $\beta=1$.
We investigated the errors on the following 101 points
\[
 t = t_i = 2^{i}, \quad i = -50,\,-49,\,\ldots,\,49,\,50,
\]
and maximum error among these points is plotted on the graph.
Only in the case of the second order derivative,
we used Mathematica with 20 digits of precision to compute $f''(t_i)$,
because the naive implementation in C did not give accurate results
(in contrast, approximate formulas were implemented purely in C
with double-precision).
The results are shown in Figs.~\ref{fig:example1_0}--\ref{fig:example1_2}.
We observe that in all cases,
the improved approximation formula~\eqref{eq:new-formula}
converges faster than Stenger's formula~\eqref{eq:Stenger-formula}.

\begin{figure}[htpb]
{\centering
\includegraphics[scale=.85]{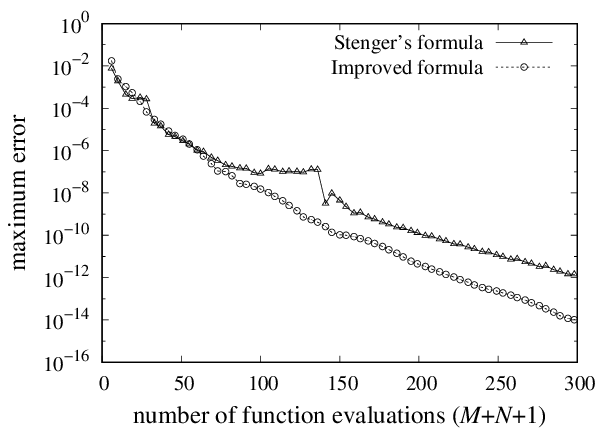}
\caption{Approximation errors of $f(t)$ in~\eqref{eq:example1}. $M$ and $N$ are defined by~\eqref{eq:Def-MN} with respect to $n$.}\label{fig:example1_0}
\includegraphics[scale=.85]{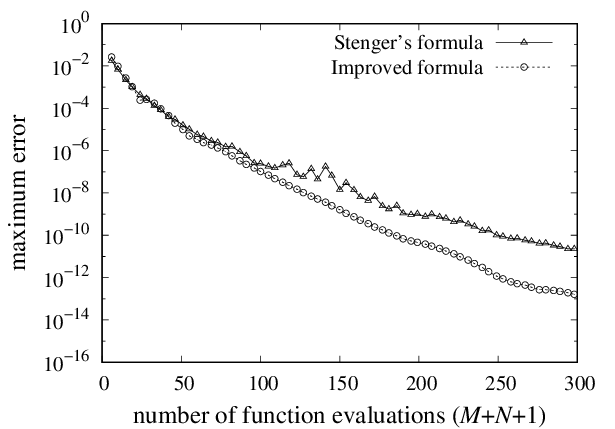}
\caption{Approximation errors of $f'(t)$ in~\eqref{eq:example1}. $M$ and $N$ are defined by~\eqref{eq:Def-MN} with respect to $n$.}\label{fig:example1_1}
\includegraphics[scale=.85]{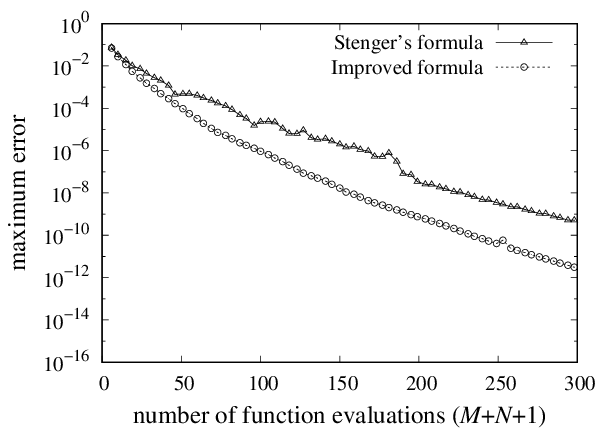}
\caption{Approximation errors of $f''(t)$ in~\eqref{eq:example1}. $M$ and $N$ are defined by~\eqref{eq:Def-MN} with respect to $n$.}\label{fig:example1_2}
}
\end{figure}

Next, we consider the following function
\begin{equation}
 f(t) = \frac{1}{(4 + t^2)(1 + \E^{\pi t/2})},\quad t\in(-\infty,\infty),
\label{eq:example2}
\end{equation}
which is the case of $i=4$.
Therefore, we set $g_4(t) = 1$.
The function $f$ satisfies the assumptions
of Theorem~\ref{thm:Stenger-4} with
$d=1.57$, $\alpha=2$ and $\beta=\pi/4$,
and also satisfies the assumptions
of Theorem~\ref{thm:our-4} with
$d=2.07$, $\alpha=2$ and $\beta=\pi/2$.
We investigated the errors on the following 202 points
\[
 t = \pm 2^{i}, \quad i = -50,\,-49,\,\ldots,\,49,\,50,
\]
and $t = 0$ (203 points in total), and
maximum error among these points is plotted on the graph.
The result is shown in Figs.~\ref{fig:example2_0}--\ref{fig:example2_2}.
We observe that in all cases,
the improved approximation formula~\eqref{eq:new-formula}
converges faster than Stenger's formula~\eqref{eq:Stenger-formula}.
Futhermore, the convergence profile of Stenger's formula appears relatively bumpy
compared to the improved approximation formula,
although the underlying mechanism remains unclear.

\begin{remark}
In all the experiments, not relative errors but absolute errors
were investigated, because
all convergence theorems
(Theorems~\ref{thm:Stenger-2}, \ref{thm:Stenger-4}, \ref{thm:our-2}
and~\ref{thm:our-4})
state the convergence on the absolute errors.
We note that the relative errors do not converge uniformly
on the given interval, in contrast to the absolute errors.
\end{remark}

\begin{figure}[htpb]
{\centering
\includegraphics[scale=.85]{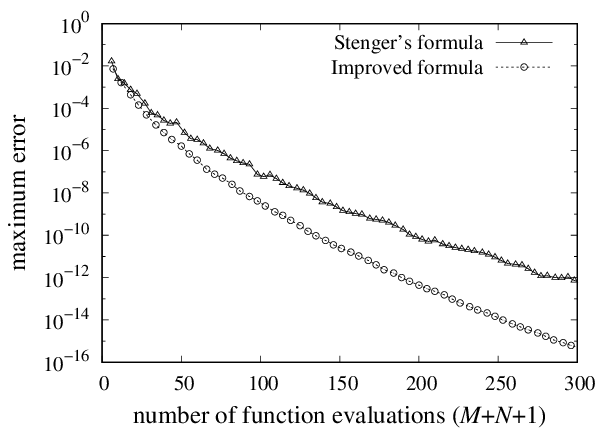}
\caption{Approximation errors of $f(t)$ in~\eqref{eq:example2}. $M$ and $N$ are defined by~\eqref{eq:Def-MN} with respect to $n$.}\label{fig:example2_0}
\includegraphics[scale=.85]{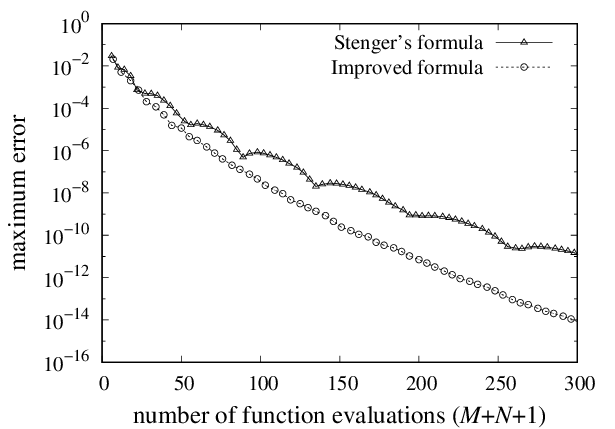}
\caption{Approximation errors of $f'(t)$ in~\eqref{eq:example2}. $M$ and $N$ are defined by~\eqref{eq:Def-MN} with respect to $n$.}\label{fig:example2_1}
\includegraphics[scale=.85]{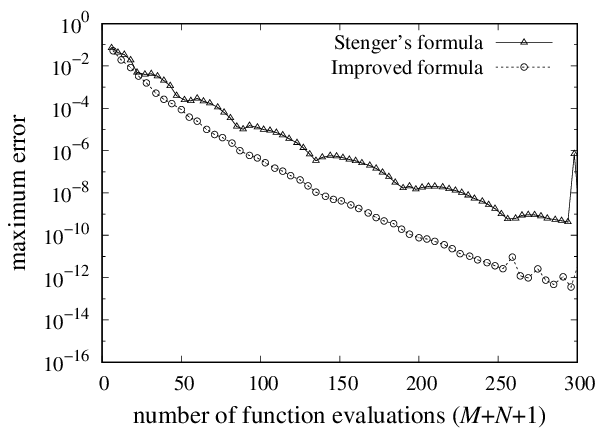}
\caption{Approximation errors of $f''(t)$ in~\eqref{eq:example2}. $M$ and $N$ are defined by~\eqref{eq:Def-MN} with respect to $n$.}\label{fig:example2_2}
}
\end{figure}

\section{Concluding remarks}
\label{sec:conclusion}

Stenger~\cite{stenger93:_numer,Stenger} proposed
approximation formulas for derivatives
based on the Sinc approximation~\eqref{eq:Sinc-approximation}
combined with appropriate conformal maps,
which were determined according to the target interval $(a,b)$
and the decay rate of the given function $f(t)$.
When $(a,b)=(0,\infty)$ and $|f(t)|$ decays exponentially as
$t\to\infty$, he adopted the conformal map $\psi_2(x)=\arsinh(\E^x)$.
When $(a,b)=(-\infty,\infty)$ and $|f(t)|$ decays algebraically as
$t\to-\infty$ and exponentially as $t\to\infty$,
he adopted the conformal map $\psi_4(x)=\sinh(\log(\arsinh(\E^x)))$.
He also provided convergence theorems of the two formulas
as described in Theorems~\ref{thm:Stenger-2} and~\ref{thm:Stenger-4},
which claim that the convergence rate of his formulas
is $\OO(n^{(m+1)/2}\exp(-\sqrt{\pi d \mu n}))$.
Here, $m$ denotes the order of derivative,
and $d$ and $\mu$ denote the parameters that are determined
by the regularity and the decay rate of $f(t)$, respectively.
In this study,
we proposed improved formulas by replacing the conformal maps
in his formula;
replace $\psi_2(x)$ with $\phi_2(x)=\log(1+\E^x)$,
and $\psi_4(x)$ with $\phi_4(x)=2\sinh(\log(\log(1+\E^x)))$.
Furthermore, we provided convergence theorems of the improved formulas
as described in Theorems~\ref{thm:our-2} and~\ref{thm:our-4},
which claim that the replacement may increase the value of $d$
and $\mu$ appearing in the convergence rate.
In fact, such improvements were observed in the numerical experiments
presented in Section~\ref{sec:numer}.

The conformal maps have a room for further improvement.
All conformal maps that appear in this paper
are categorized as the single-exponential transformations.
In various formulas based on the Sinc approximation,
acceleration of convergence has been reported
by replacing the single-exponential transformations with the
double-exponential transformations~\cite{Mori,murota25:_doubl,SugiharaMatsuo}.
Indeed, such an improvement was recently reported
for derivatives over the finite interval~\cite{okayama24:_approx_sinc_de}.
We are currently working on improvements for other cases
by employing the double-exponential transformations
adopted in different contexts~\cite{OkayamaSinc}.


\bibliography{SincApproxDeriv}

\end{document}